\renewcommand*\env@matrix[1][\arraystretch]{%
  \edef\arraystretch{#1}%
  \hskip -\arraycolsep
  \let\@ifnextchar\new@ifnextchar
  \array{*\c@MaxMatrixCols c}}
\newcommand{\Zin}[1]{\llbracket #1 \rrbracket}
\newcounter{oldeq}
\newcounter{usesofarxiv}
 \newcommand{\arxiv}[1]{
\setcounter{oldeq}{\value{equation}}
 \addtocounter{usesofarxiv}{1}
 \setcounter{equation}{0}
\def\theoldeq{\theequation}
\def\theequation{x-\arabic{usesofarxiv}.\arabic{equation}}
\def\theequation{\arabic{section}.\arabic{usesofarxiv}.\arabic{equation}}
\def\theequation{\thesection.\arabic{usesofarxiv}.\arabic{equation}}
  \colorlet{shadecolor}{gray!10}
{
\begin{shaded}
\footnotesize
#1  \normalsize
\end{shaded}
   \setcounter{equation}{\value{oldeq}}
\numberwithin{equation}{section}
}}
\newcommand{\ffloor}[1]{\lfloor\!\lfloor{#1}\rfloor\!\rfloor}
\newcommand{\comment}[1]{}
\newcommand{\commentYZ}[1]{}
\newcommand{\longcomment}[1]{}
\renewcommand{\longcomment}[1]{\ovalbox{\begin{minipage}{.9\textwidth}\color{blue}#1\end{minipage}}}
\renewcommand{\comment}[1]{{\color{magenta}\ovalbox{#1}}}
\newcommand{\hide}[1]{{\color{blue}\ovalbox{\tiny (hidden text)}}}
\def\fddto{\stackrel{\rm f.d.d.}{\Longrightarrow}}
\newcommand{\ind}{{\bf 1}}
\def\g{{\mathsf q}}
\newcommand{\eqnh}{\begin{eqnarray*}}
\newcommand{\eqne}{\end{eqnarray*}}
\newcommand{\eqnhn}{\begin{eqnarray}}
\newcommand{\eqnen}{\end{eqnarray}}
\newcommand{\equh}{\begin{equation}}
\newcommand{\eque}{\end{equation}}
\def\topp#1{^{(#1)}}
\def\ccbb#1{\left\{#1\right\}}
\def\pp#1{\left(#1\right)}
\def\bb#1{\left[#1\right]}
\def\floor#1{\left\lfloor #1 \right\rfloor}
\def\ceil#1{\left\lceil #1 \right\rceil}
\def\vv#1{{\boldsymbol #1}}
\def\wt#1{\widetilde{#1}}
\def\limn{\lim_{n\to\infty}}
\def\weakto{\Rightarrow}
\def\toD{\weakto}
\def\Z{{\mathbb Z}}
\def\R{{\mathbb R}}
\def\calD{\mathcal D}
\def\fddto{\xrightarrow{\textit{f.d.d.}}}
\renewcommand{\ind}{{\bf 1}}
\newcommand{\hidecomment}[1]{}
\newtheorem{theorem}{Theorem}[section]
\newtheorem{corollary}[theorem]{Corollary}
\newtheorem{lemma}[theorem]{Lemma}
\newtheorem{proposition}[theorem]{Proposition}
\theoremstyle{definition}
\theoremstyle{remark}
\newtheorem{remark}[theorem]{Remark}
\theoremstyle{remark}
\newcommand{\calP}{{\mathcal P}}
\newcommand{\TT}{\mathbb{T}}
\def\<{\langle}
\def\>{\rangle}
\newcommand{\RR}{\mathds{R}}
\newcommand{\ZZ}{\mathds{Z}}
\newcommand{\eps}{\varepsilon}
\newcommand{\Var}{\mathds{V}\!\mathrm{ar}}
\newcommand{\EE}{\mathds{E}}
\renewcommand{\Pr}{\mathds{P}}
\def\<{\langle}
\def\>{\rangle}
\numberwithin{equation}{section}
\date{Created   Monday, February 13, 2023. Current file \jobname.tex. Printed: \today}
\title{Pitman's discrete $2M-X$ theorem for arbitrary initial laws\\ and   continuous time limits}
\author{W\l odzimierz Bryc}
\address
{
W{\l}odzimierz Bryc\\
Department of Mathematical Sciences\\
University of Cincinnati\\
2815 Commons Way\\
Cincinnati, OH, 45221-0025, USA.
}
\email{wlodzimierz.bryc@uc.edu}
\author{Jacek Weso{\l}owski}
\address{Jacek Weso{\l}owski, Faculty of Mathematics and Information Science,
Warsaw University of Technology, pl. Politechniki 1, 00-661
Warszawa, Poland}
\email{jacek.wesolowski@pw.edu.pl}
\keywords{Pitman transformation; discrete Bessel process;  random walk; random walk  above a random level; continuous time limit; convergence to functionals of Brownian motion}
\subjclass[2010]{60J10; 60J15; 60F17}
\numberwithin{equation}{section}
\begin{document}\sloppy

\begin{abstract}
We discuss Pitman’s representation of a Markov process, which serves as a discrete analog to the Bessel 3D process starting at time 0 from an arbitrary initial law. This representation involves maxima of lazy simple random walks  and an auxiliary independent random variable.
 The law of the auxiliary random variable is explicitly related to the initial law of the Markov process.
 The proof is kept at an elementary level and relies on a reconstruction formula for the generalized Pitman transform.

We then use continuous-time limits to shed additional light on the relation between two representations of the Bessel 3D process that appeared in the description of the stationary measure of the KPZ fixed point on the half-line, as proposed in %
\cite{barraquand2022steady}.

\end{abstract}
\maketitle

\section{Introduction}
Let $\pp{S_t}_{t=0,1,\dots}$ be a symmetric random walk with increments $\pm 1$ and $M_t=\max_{j\in\{0,1,\dots,t\}} \,S_j$.
 Pitman \cite[Lemma 3.1]{pitman1975one} proved that  $\pp{Y_t}_{t=0,1,\dots}:=\pp{2M_t-S_t}_{t=0,1,\dots}$ is a discrete-time Markov process  started at $X_0=0$ with transition probabilities
 \begin{equation}
   \label{X-trans}
    \Pr(Y_{t+1}-Y_t=\delta |Y_t=k)=\tfrac12 \begin{cases}
   \frac{k+2}{k+1} & \delta =1, \\
   \frac{k}{k+1} & \delta =-1,
 \end{cases}\qquad t=0,1,\dots.
 \end{equation}
  As part of  the proof, he also established that the conditional law of $S_n$ given $(Y_0,Y_1,\dots,Y_n)$ is uniform on the set $\{-Y_n,-Y_n+2,\dots,Y_n-2,Y_n\}$, see \cite[Theorem 1.1]{Chapon-Chhaibi-2021} for an explicit statement.

From \eqref{X-trans} it follows that the Pitman transform $\pp{S_t}_{t=0,1,\dots}\mapsto \pp{2M_t-S_t}_{t=0,1,\dots}$ maps a random walk to a Markov process with values in $\ZZ_{\geq 0}=\{0,1,\ldots\}$, which can be interpreted as a random walk conditioned on remaining nonnegative. Random walks conditioned on positivity have been studied in numerous references, including \cite{keener1992limit}, \cite{bertoin1994conditioning}, resulting in processes with transition probabilities \eqref{X-trans} and \eqref{Z-trans} (with $\sigma=0$).
 Ref. \cite{OConnell2002representation}  pointed out that
  their results %
yield the following discrete analogue of Pitman’s lemma: if $\pp{S_t}_{t= 0,1,\ldots}$ is a simple random walk with
non-negative drift (in continuous or discrete time) and $M_t = \max_{ j\in\{0,1,\ldots, t\}} S_j$, then
$\pp{2M_t-S_t}_{t=0,1,\ldots}$ has
the same law as that of $\pp{S_t}_{t=0,1,\ldots}$ conditioned to stay positive (in the case of a symmetric random walk, this conditioning is in the sense of Doob).
  \cite{hambly2001pitman} relax independence assumption, proving that the process $\pp{2M_t-S_t}_{t=0,1,\ldots}$ has the same law as that of $\pp{S_t}_{t=0,1,\ldots}$
conditioned to stay non-negative when $\pp{S_t-S_{t-1}}_{t=0,1,\ldots}$ is a $\pm1$-valued Markov process.
The Pitman transform
appears also in the description of the dynamics of the box-ball system in
\cite{Croydon2023}.

 This transform inspired numerous   modifications   and generalizations, see \cite[Section 3.4]{corwin2014tropical}  for a concise summary, including relations to the Robinson--Schensted--Knuth correspondence studied in  \cite{o2003conditioned,Oconnell-2003-trans}.
  \cite{miyazaki1989theorem} and \cite{tanaka1989time}  generalized Pitman's result, describing Markov processes that arise by the same representation from nonsymmetric simple random walks. In particular, \cite[Proposition 2]{miyazaki1989theorem} derived transition probabilities \eqref{Z-trans} when $\sigma=0$.  \cite{mishchenko2006discrete} established  additional properties of $\pp{Y_t}_{t=0,1,\dots}$,  including a discrete version of a theorem from \cite{Imhof1992}.
 Biane,
Bougerol and O’Connell in \cite{biane2005littelmann} and \cite{biane2009continuous} have extended Pitman’s theorem in the context of semisimple
Lie algebras and finite Coxeter groups. More recently,
\cite{bougerol2018pitman} and \cite{defosseux2023converse} have established Pitman-type
theorems in the context of affine Lie algebra. \cite{corwin2024periodic}  used periodic Pitman transforms to study invariant measures for KPZ. There is also a
 geometric version of the Pitman transform that was introduced in \cite{matsumoto1999some} and
was studied in \cite{donati2001some} and \cite{hariya2004limiting}. Its relation to the geometric RSK correspondence is discussed in \cite[Section 3.4]{corwin2014tropical}.

Pitman's lemma yields the following folklore result,  which we were not able to locate explicitly in the literature.
  \begin{theorem}\label{Thm-reviewer}
    If $(S_t)_{ t= 0,1,\dots}$ is a   symmetric  random walk on $\ZZ$ starting from $0$ and $G$ is an independent random variable, which is   uniformly distributed on the set $\{0,1,\dots, g\}$  for some non-negative integer $g$, then
    \begin{equation}\label{Pit-G}
      X_t:= g+ 2 (M_t-G)_+-S_t ,\quad t=0,1,\dots,
    \end{equation}
    is a discrete Bessel process  (that is, its transition probabilities are given by \eqref{X-trans}) starting from $X_0=g$.
  \end{theorem}

 Theorem \ref{Thm-reviewer} can be derived from Pitman's lemma  by stopping $2M-S$ process at the first time it reaches integer $g>0$, compare
 \cite[Theorem 2]{mishchenko2006discrete}.
    (Theorem \ref{Thm-reviewer} follows also from more general Theorem \ref{Thm1}, which we prove independently of Pitman's lemma.)
  \arxiv{  \begin{proof}
Fix $g\in\ZZ_{\geq 0}$. Take $t_*$ large enough (and of the right parity) so that $\Pr(S_{t_*}=g)>0$.    Therefore, with the Pitman transform $(Y_t=2M_t-S_t)_{t=0,1,\ldots}$   as above,  $\Pr(Y_{t_*}=g)>0$ and the conditional law of $S_{t_*}$ given $A_g:=\{Y_0=0, Y_1=y_1 ,\dots,Y_{t_*}=g\}$ of positive probability, is uniform on the set $\{-g, -g+2, \dots g-2, g\}$.

Let  $X_t=Y_{t+t_*}$ and  %
 $G=\max_{k\leq t^*} S_k-S_{t_*}$. Then $G=\tfrac12 (Y_{t_*} -S_{t_*})$ is uniform on $\{0,1,\dots, g\}$ conditionally on $A_g$.
We will verify that \eqref{Pit-G} holds conditionally on $A_g$.

 Since %
 $\max \{a,b\}=a+(b-a)_+$ we have

\begin{multline}\label{Review2}
  Y_{t+t_*}= 2 \max_{k\leq t+t_*} S_k - S_{t+t_{*}} = 2 \max_{k\leq t_*} S_k   +
  2 (\max_{j\leq t}S_{j+t_*}-\max_{k\leq t_*} S_k)_+  - S_{t+t_{*}}
 \\ =
   Y_{t_*}  +
  2 (\max_{j\leq t}(S_{j+t_*}-S_{t_*})-G)_+  - ( S_{t+t_{*}} -S_{t_*})
  =Y_{t_*}  +
  2 (\max_{j\leq t}(S_{j}^*)-G)_+  -  S_{t}^*
\end{multline}
where $S_{t}^*=S_{t+t_*}-S_{t_*}$.   Note that $(S_t^*)_{t=0,1,\ldots}$ and  $(G,A_g)$ are independent, and $(S_t^*)_{t=0,1,\ldots}\stackrel{d}{=}(S_t)_{t=0,1,\ldots}$.  Therefore,
\eqref{Review2} says that
\begin{equation}
    \label{Reviewer3}
    X_t=X_0+2 (\max_{j\leq t} S_j^*-G)_+-S_t^*
\end{equation}
conditionally on $A_g$ has the same law as the right-hand side of \eqref{Pit-G}, and in particular $\Pr(X_0=g|A_g)=1$.

On the other hand, transition probabilities for $(X_t)_{t=0,1,\ldots}$ as defined by the right-hand side of \eqref{Reviewer3} do not depend on conditioning on $A_g$ and are the same as transition probabilities for $(Y_{t_*+t})_{t=0,1,\ldots}$, which are given by \eqref{X-trans}.
  \end{proof}

   }

Related results in   continuous-time appeared recently in the description of the stationary measures for the hypothetical KPZ fixed point on the half-line in Refs. \cite{barraquand2022steady,Bryc-Kuznetsov-2021}.  In discrete-time, Markov processes with transition probabilities given by \eqref{Z-trans} and  special   initial laws arise also as limits of random Motzkin paths in \cite{Bryc-Wang-2023b}.

Our goal is to clarify these relations by providing explicit formulas that extend these examples to more general initial laws.  For the proofs in continuous-time, we rely on discrete approximations. For the discrete-time results, we use elementary and explicit formulas for the finite-dimensional distributions.

\color{black}

For $\rho>0$ and $\sigma\geq 0$ we consider the following three processes: a random walk $(S_t)_{t=0,1,\dots}$, starting at $S_0=0$, and with $S_t=\sum_{j\leq t}\xi_j$, where $\xi_j$ are i.i.d. random variables with the law %
\begin{equation}\label{xi-gen}
  \Pr(\xi_1=\delta )=\tfrac{1}{\rho+\sigma+1/\rho}\begin{cases}
    1/\rho & \delta =1, \\
    \sigma & \delta =0, \\
    \rho & \delta =-1,
  \end{cases}
\end{equation}
  a random walk $(\wt S_t)_{t=0,1,\dots}$, starting at  $\wt S_0=0$, and with
 $\wt S_t=\sum_{j=1}^t \wt \xi_j$ where $\wt \xi_j$ are i.i.d. random variables with the law  of $-\xi_1$
and a Markov process $(X_t)_{t=0,1,\dots}$ with transition probabilities %
    \begin{equation}\label{Z-trans}
  \Pr(X_{t+1}=k+\delta |X_t=k)=\Pr(\xi_1=\delta )\,\frac{[k+\delta +1]_{\rho^2}}{[k+1]_{\rho^2}}, %
  \end{equation}
and with arbitrary initial law supported in $\ZZ_{\geq 0}$. Here $[n]_q=1+\dots+q^{n-1}$.
We will occasionally extend this notation to $a\in\RR$,
$$[a]_q=\begin{cases}
    \frac{1-q^a}{1-q} & q \ne 1 \\
    a & q=1.
\end{cases}.
$$
  We note that  $(S_t)_{t=0,1,\dots} \stackrel{\mathcal D}{=}(-\wt S_t)_{t=0,1,\dots}$, and that if $\rho=1$, then  $(S_t)_{t=0,1,\dots}$ is a symmetric (lazy) random walk,
  and transition probabilities \eqref{Z-trans} become
  \begin{equation*}%
     \Pr\pp{X_{t+1}=k+\delta \middle|X_t=k}=\tfrac1{2+\sigma} \begin{cases}
   \frac{k+2}{k+1} & \delta =1, \\
  \sigma & \delta =0, \\
   \frac{k}{k+1} & \delta =-1,
 \end{cases}\qquad t=0,1,\ldots,
  \end{equation*} in agreement with \eqref{X-trans} when $\sigma=0$.
  We also note that   formula \eqref{xi-gen}  encompasses  all the laws on $\{0,\pm 1\}$ that assign a positive mass to $+1$ and $-1$.

We consider arbitrary initial laws for $X_0 \in\ZZ_{\geq 0}$.   This law  affects the $2M-S$ representation   of $(X_t-X_0)$ through an
auxiliary independent random variable  $G$ with an appropriate law related to that of $X_0$.
The following extends Theorem \ref{Thm-reviewer}.

\begin{theorem}\label{Thm1}
Assume $\rho>0$ %
  and $\sigma\geq 0$.
Let $G$ be a random variable with values in $\ZZ_{\geq 0}$ such that $G$ and $(S_t)_{t=0,1,\dots}$ are independent.
 If $(X_t)_{t=0,1,\dots}$ is a Markov process with transition probabilities \eqref{Z-trans},  then the following conditions are equivalent:
  \begin{enumerate}
    [(i)]
    \item
    \begin{equation}
      \label{Pit-rep}
       (X_t-X_0)_{t=0,1,\dots} \stackrel{\calD}{=}(2(M_t-G)_+-S_t)_{t= 0,1,\dots},
    \end{equation}
     where   $ M_t=\max_{j\in\{0,1,\ldots, t\}} S_j$.
    \item
     The law of $G$ is
\begin{equation}
    \label{q-damage}
         \Pr(G=n)=\rho^{2n}\sum_{j=n}^\infty \tfrac{\Pr(X_0=j)}{[j+1]_{\rho^2}} ,\quad n=0,1,\dots.
\end{equation}

  \end{enumerate}

\end{theorem}

Implication (ii)$\Rightarrow$(i) can be obtained from Theorem \ref{Thm-reviewer} and discrete Girsanov transformation \eqref{Girsanov}. In our more direct approach we compare the finite dimensional distributions of the processes in \eqref{Pit-rep}.

Since random walk $\wt S$ is the same as random walk $S$ with $1/\rho$ instead of $\rho$ and transition probabilities \eqref{Z-trans}  are given by the same expression in both cases, we have the following equivalent version of Theorem \ref{Thm1}.
\begin{remark}\label{Rem:Thm1}
    Let $\wt G$ be a random variable with values in $\ZZ_{\geq 0}$ such that $\wt G$ and $(\wt S_t)_{t=0,1,\dots}$ are independent. Then the following conditions are equivalent:
  \begin{enumerate}
    [(i)]
    \item
    \begin{equation}
      \label{Pit-rep2}
       (X_t-X_0)_{t=0,1,\dots} \stackrel{\calD}{=}(2(\wt M_t-\wt G)_+-\wt S_t)_{t= 0,1,\dots},
    \end{equation}
    where    $\wt M_t=\max_{j\in\{0,1,\ldots, t\}}\wt S_j$.
     \item
     The law of $\wt G$ is
 \begin{equation}
     \label{wt-G-dam}
      \Pr(\wt G=n)=\rho^{-2n}\sum_{j=n}^\infty \tfrac{\Pr(X_0=j)}{[j+1]_{1/\rho^2}} ,\quad n=0,1,\dots.
 \end{equation}

  \end{enumerate}
\end{remark}
\color{black}
We remark that with $X_0$, $G$ and $\wt G$ defined on a common probability space the relation \eqref{q-damage} can be interpreted as specifying the conditional law of $G$ given $X_0$ as

 \begin{equation}\label{q-thin}
      \Pr( G=m|X_0=n)=\frac{\rho^{2m}}{[n+1]_{\rho^2}} %
      \quad m=0,1,\dots,n,\; n=0,1,\dots
    \end{equation}
Similarly,   \eqref{wt-G-dam} is equivalent to specification of the conditional law of random variable $\wt G$ given $X_0=n$ by the right hand side of \eqref{q-thin}  with $\rho$ changed to $1/\rho$.  Moreover, calculation shows that $\wt G\stackrel{\mathcal D}{=}X_0-G$.
It is clear that  the original Pitman  result corresponds to the case $\rho=1$, $X_0=0$, $G=0$, and $\sigma=0$.  (For an extension to non-symmetric random walk, see \cite{miyazaki1989theorem}.)

We remark that if  \eqref{q-damage} and \eqref{wt-G-dam} hold   then Theorem  \ref{Thm1} and Remark \ref{Rem:Thm1} trivially imply
\begin{equation}\label{H-Y-D}
    (2(M_t-G)_+-S_t)_{t= 0,1,\dots}\stackrel{\calD}{=}(2(\wt M_t-\wt G)_+-\wt S_t)_{t= 0,1,\dots},
\end{equation}
where  $G,(S_t)_{t=0,1,\dots}$ are independent and $\wt G,(\wt S_t)_{t=0,1,\dots}$ are independent.

It is also of interest to note that there are other processes beyond $\pp{X_t}_{t=0,1,\ldots}$ that can be represented by the transformation $2(M-G)_+-S$.
An example of such a process is the random walk $\pp{ S_t}_{t=0,1,\ldots}$ itself, see Remark \ref{Rem-S}.

 Next, we give an interpretation of the process $(X_t-X_0)_{t=0,1,\ldots}$ as a random walk conditioned on staying above an independent random level.

\begin{theorem}\label{Thm1-2} Assume $\sigma\geq 0$.
Let $V$ be a random variable with values in $\ZZ_{\geq 0}$ such that $V$ and $(S_t)_{t=0,1,\dots}$ are independent.  If  $0<\rho<1$,
 and $(X_t)_{t=0,1,\dots}$ is a Markov process with transition probabilities \eqref{Z-trans},  then the following conditions are equivalent:
\begin{enumerate}[(i)]
    \item The law of $(X_t-X_0)_{t=0,1,\dots}$ is the same as the law of $(S_t)_{t=0,1,\dots}$ conditioned on
     $(S_t+V)_{t=0,1,\dots}$  to remain non-negative.

    \item The law of $V$ is \[\Pr(V=k)\propto \tfrac{\Pr(X_0=k)}{[k+1]_{\rho^2}},\quad k=0,1,\dots.\]
\end{enumerate}

\end{theorem}

Instead of deriving implication (ii)$\Rightarrow(i)$ from the results on random walks conditioned on remaining non-negative, we give a direct elementary proof.

Noting that if  $\rho>1$, then  $1/\rho<1$,  we get the following version of Theorem \ref{Thm1-2}.
\begin{remark}\label{Rem:Thm1-2}
Let $\wt V$ be a random variable with values in $\ZZ_{\geq 0}$ such that $\wt V$ and $(\wt S_t)_{t=0,1,\dots}$ are independent.    If  $\rho>1$, then the following conditions are equivalent:
\begin{enumerate}[(i)]
    \item The law of $(X_t-X_0)_{t=0,1,\dots}$ is the same as the law of $(\wt S_t)_{t=0,1,\dots}$ conditioned on
     $(\wt S_t+\wt V)_{t=0,1,\dots}$  to remain non-negative.

    \item The law of $\wt V$ is
    \[\Pr(\wt V=k)\propto \tfrac{\Pr(X_0=k)}{[k+1]_{1/\rho^2}},\quad k=0,1,\dots.\]
\end{enumerate}
\end{remark}
\color{black}

For a given law of $X_0$ and $\rho\in(0,1)$, the laws of random variables $G$ in Theorem \ref{Thm1} and $V$ in Theorem \ref{Thm1-2} are related by
$$ C \Pr(G=n)= \rho^{2n} \Pr(V\geq n), \quad n=0,1,\dots$$
 where   $C$ is the proportionality constant from Theorem \ref{Thm1-2}.
In particular,    $\Pr(V=k)=C {\Pr(X_0=k)}/{[k+1]_{\rho^2}}$ with $C=1/\Pr(G=0)$.

We now point out a special case of interest, a process $(Z_t)_{t=0,1,\dots}$  that appears as the limit of random Motzkin paths in \cite[Corollary 1.9]{Bryc-Wang-2023b}.
 We write $G\sim \mathrm{geo}(p)$ if $G$ is a geometric random variable with parameter $p\in[0,1)$, that is, $\Pr(G=n)=(1-p)p^n$, $n=0,1,\dots$. The transition probabilities of $(Z_t)_{t=0,1,\ldots}$  are as in \eqref{Z-trans} and the initial law is
\begin{equation}\label{Ini-rho}
  Z_0 \stackrel{\mathcal D}{ =} G+\wt G,
\end{equation}
where $G$ and $\wt G$ are independent $G\sim \mathrm{geo}(\rho_0\rho)$ and $\wt G\sim \mathrm{geo}(\rho_0/\rho)$ with  $1<\rho<1/\rho_0$.
(Note that this inequality implies $\rho_0<1$, so we get $\rho_0<\rho<1/\rho_0 $ and hence $\rho\rho_0<1$ and $\rho_0/\rho<1$.) That is,   $Z_0$  has the  $q$-negative binomial law,
see  \cite[Theorem 3.1 for  $k=2$]{charalambides2016discrete}, i.e.,
\begin{equation}
    \label{dunkl}\Pr(Z_0=m)= [m+1]_q \theta^m(1-\theta)(1-\theta q), \; m=0,1,\dots
\end{equation}
with $q=\rho^2$ and $\theta=\rho_0/\rho$.

\begin{corollary}\label{C-BD-T2.8}
Let $1<\rho<1/\rho_0$. Then the law of
$ (Z_t-Z_0)_{t=0,1,\dots}$ is the same as
\begin{enumerate}[(i)]
\item  the law of $(2(M_t-G)_+-S_t)_{t= 0,1,\dots}$, where    $(S_t)_{t=0,1,\dots}$ and $G\sim \mathrm{geo}(\rho_0\rho)$ are independent.
\item   the law of $(\wt S_t)_{t=0,1,\dots}$  conditioned on
     $(\wt S_t+\wt V)_{t=0,1,\dots}$  to  remain non-negative,
   where   $(\wt S_t)_{t=0,1,\dots}$ and $\wt V \sim \mathrm{geo}(\rho_0\rho)$ are independent.
\end{enumerate}

\end{corollary}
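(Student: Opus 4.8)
The plan is to read off both assertions directly from Theorem~\ref{Thm1} and Theorem~\ref{Thm1-2}, the only input being the explicit law \eqref{dunkl} of $Z_0$. Recall that $\{Z_t\}$ has the transition probabilities \eqref{Z-trans} and, by \eqref{Ini-rho}, $Z_0$ has the $q$-negative binomial law $\Pr(Z_0=m)=[m+1]_q\theta^m(1-\theta)(1-\theta q)$ with $q=\rho^2$ and $\theta=\rho_0/\rho$. Before anything else I would record that the hypothesis $1<\rho<1/\rho_0$ forces $0\le\rho_0\rho<1$ and $0\le\rho_0/\rho<1$, so the geometric laws below are honest probability laws, and that $\rho>1$, so part~(II) of Theorem~\ref{Thm1-2} is applicable; Theorem~\ref{Thm1}(I) is available for any $\rho>0$.

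For part~(1) I would invoke Theorem~\ref{Thm1}(I), implication (ii)$\Rightarrow$(i): it suffices to check that the variable $G$ whose law is prescribed by \eqref{q-damage} (with $X_0=Z_0$) is distributed as $\mathrm{geo}(\rho_0\rho)$. Plugging \eqref{dunkl} into \eqref{q-damage}, the factor $[j+1]_{\rho^2}$ cancels and a geometric series remains, giving $\Pr(G=n)=\rho^{2n}(1-\theta)(1-\theta q)\sum_{j\ge n}\theta^{j}=\rho^{2n}(1-\theta q)\theta^{n}$; since $\rho^{2}\theta=\rho_0\rho$ and $\theta q=\rho_0\rho$, this is $(1-\rho_0\rho)(\rho_0\rho)^{n}$, as required, and then \eqref{Pit-rep} is exactly part~(1).

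For part~(2) I would invoke Theorem~\ref{Thm1-2}(II), which applies because $\rho>1$; the work is to identify the law of $\wt V$, which by condition~(ii) there is proportional to $\Pr(Z_0=k)/[k+1]_{1/\rho^2}$. Using the elementary identity $[k+1]_{1/q}=q^{-k}[k+1]_{q}$ one finds $\Pr(Z_0=k)/[k+1]_{1/\rho^2}=q^{k}\theta^{k}(1-\theta)(1-\theta q)\propto(q\theta)^{k}=(\rho_0\rho)^{k}$, so $\wt V\sim\mathrm{geo}(\rho_0\rho)$, and condition~(i) of Theorem~\ref{Thm1-2}(II) is precisely the conditioned-walk description in part~(2). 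There is no genuine obstacle here: the whole argument is bookkeeping of the substitution $q=\rho^{2}$, $\theta=\rho_0/\rho$ together with the summation identity $\sum_{j\ge n}\theta^{j}=\theta^n/(1-\theta)$ and the ratio identity $[k+1]_q/[k+1]_{1/q}=q^{k}$; the only point that repays care is confirming that the window $1<\rho<1/\rho_0$ places every geometric parameter in $[0,1)$ and meets the sign and size requirements in the hypotheses of the two theorems. (As a sanity check, the same two identities applied to \eqref{wt-G-dam} and to condition~(ii) of Theorem~\ref{Thm1-2}(I) return $\wt G\sim\mathrm{geo}(\rho_0/\rho)$ and $V\sim\mathrm{geo}(\rho_0/\rho)$, consistent with the decomposition \eqref{Ini-rho}.)
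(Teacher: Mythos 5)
Your proposal is correct and takes essentially the same route as the paper: both parts are read off from Theorem~\ref{Thm1}(I), implication (ii)$\Rightarrow$(i), and from Theorem~\ref{Thm1-2}(II), after identifying the laws of $G$ and $\wt V$ by a short computation with the $q$-negative binomial law \eqref{dunkl}. The only cosmetic difference is in part~(1): the paper verifies the equivalent conditional-law form \eqref{q-thin} using the decomposition $Z_0=G+\wt G$, whereas you compute the marginal law of $G$ directly from \eqref{q-damage}; these checks are interchangeable by the remark following Theorem~\ref{Thm1}.
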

 \begin{proof}
  In view of \eqref{Ini-rho}, to prove the first part we use \eqref{dunkl} to verify \eqref{q-thin}.   For $n,m\in\mathds Z_{\ge 0}$,  $n\le m$ we have
        \begin{equation*}
        \Pr\pp{G=n\middle|G+\wt G=m}=\tfrac{\Pr(G=n)\cdot\Pr(\wt G=m-n)}{\Pr(G+\wt G=m)}
     =\tfrac{(1-\rho_0\rho) (\rho_0\rho)^n\cdot(1-\rho_0/\rho)(\rho_0/\rho)^{m-n}}{
      (1-\rho_0\rho)(1-\rho_0/\rho)(\rho_0/\rho)^m[m+1]_{\rho^2}
      }
      = \tfrac{\rho^{2n}}{[m+1]_{\rho^2}}.
    \end{equation*}
    To prove the second part we apply Theorem \ref{Thm1-2}(ii)   and rely on \eqref{dunkl} to get
    \[\Pr(\wt V =m)=c \frac{\Pr(Z_0=m)}{[m+1]_{1/\rho^2}}=c\frac{(1-\rho_0\rho)(1-\rho_0/\rho)(\rho_0/\rho)^m[m+1]_{\rho^2}} {[m+1]_{\rho^2}/\rho^{2m+2}} \propto (\rho_0\rho)^m.\]
  \end{proof}

\begin{remark} \label{R-1/rho}
In particular, if $0\leq \rho_0\rho<1$ and $\rho_0/\rho<1$
then \eqref{H-Y-D} holds.

\end{remark}

    \begin{remark}\label{C-BD-T2.6}
Suppose $\rho=1$.  Then \eqref{Pit-rep} and \eqref{Pit-rep2} hold when $G\sim \mathrm{geo}(\rho_0)$, $\wt G\sim \mathrm{geo}(\rho_0)$ and then $X_0\stackrel{\mathcal D}{=}G+\wt G$, with $G$ and $\wt G$ independent, is
 a negative binomial random variable,
\begin{equation}\label{NB2}
  \Pr(X_0=n)=(1-\rho_0)^2(n+1) \rho_0^n,\; n=0,1,\dots
\end{equation}
  Furthermore,
\[
\Pr(G=m|G+\wt G=n)=\Pr(\wt G=m|G+\wt G=n)=\tfrac{1}{n+1},\quad m=0,1,\ldots,n; \; n=0,1,\ldots.
\]

  \end{remark}

With the initial law \eqref{Ini-rho},  process $(X_t)_{t=0,1,\dots}$ is a discrete analog of the continuous-time process in \cite[Theorem 2.8]{Bryc-Kuznetsov-2021}, and process  $(X_t-X_0)_{t=0,1,\dots}$  is an analog of the process appearing in  \cite{barraquand2022steady} as described in   \cite[Section 2.1 iii(b) and Remark 2.9]{Bryc-Kuznetsov-2021}.
With the initial law \eqref{NB2},  process $(X_t)_{t=0,1,\dots}$ is a discrete analog of the continuous-time process in \cite[Theorem 2.6]{Bryc-Kuznetsov-2021} and process  $(X_t-X_0)_{t=0,1,\dots}$  is an analog of the process appearing in  \cite{barraquand2022steady} as described in   \cite[Section 2.1 iii(a) and Remark 2.7]{Bryc-Kuznetsov-2021}.

 Theorem \ref{Thm1} can be applied together with Donsker's theorem to establish a continuous-time limit. %

\begin{theorem}
  \label{T-YZ} \sloppy
  For $N=1,2,\dots$, let $\rho=1-v/\sqrt{N}+o(1/\sqrt{N})$, where $v\in\RR$ is fixed.
  Consider a sequence $(X_t\topp N)_{t=0,1,\dots}$ of Markov processes with transition probabilities \eqref{Z-trans} and with initial laws such that
  $$\frac{X_0\topp N}{\sqrt{N}}\toD \mu \mbox{ as $N\to\infty$}
  $$
  for some probability measure $\mu$.
  Then we have the following weak convergence in Skorokhod's space $D[0,\infty)$:
\begin{equation}\label{Weak-lim+}
\frac{1}{\sqrt{N}}\left(X_{\floor {N t}}\topp N-X_0\topp N\right)_{t\geq 0}\toD \left(2 \pp{\sup_{0\leq s\leq t} B_s\topp v-\gamma}_+-B_t\topp v\right)_{t\geq 0} \mbox{ as $N\to\infty$},
\end{equation}
where   $\left(B_t\topp v\right)_{t\ge 0}$ and $\gamma$ are independent, $B_t\topp v=W_{2t/(2+\sigma)}\topp v$ with  $W_t\topp v=W_t+vt$ denoting the Wiener process with drift $v$ and  random variable $\gamma\geq 0 $ has the law
   with %
   the density %
\begin{equation}\label{density}
    f_\mu \topp v(x):=
   e^{-2vx}\int_{[x,\infty)}\,g_v(y)\,\mu(dy)\quad \mbox{on $(0,\infty)$,  where} \;g_v(y):=\begin{cases} \tfrac{2v}{1-e^{-2vy}} & v\ne 0\\
   \tfrac{1}{y} & v=0 \end{cases}
\end{equation}
 and possibly with an atom at $0$ of mass
$1-\int_0^\infty f_\mu\topp v(x)dx\geq 0$.
\end{theorem}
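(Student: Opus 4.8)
The plan is to obtain \eqref{Weak-lim+} by feeding the $2(M-G)_+-S$ representation of Theorem~\ref{Thm1}(I) into a functional central limit theorem and then the continuous mapping theorem. Fix $v\in\RR$ and take $N$ large enough that $\rho=1-v/\sqrt N>0$. By Theorem~\ref{Thm1}(I) with $q=\rho^2$, if $G\topp N$ is independent of the walk $\{S_t\topp N\}_{t\ge0}$ with step law \eqref{xi-gen} for the parameters $\rho=1-v/\sqrt N$ and $\sigma$, and if $G\topp N$ has the law \eqref{q-damage} determined by the law of $X_0\topp N$, then $\tfrac1{\sqrt N}\{X_{\floor{Nt}}\topp N-X_0\topp N\}_{t\ge0}$ has the same law as $\{2\pp{\tfrac1{\sqrt N}M_{\floor{Nt}}\topp N-\tfrac1{\sqrt N}G\topp N}_+-\tfrac1{\sqrt N}S_{\floor{Nt}}\topp N\}_{t\ge0}$, where $M_t\topp N=\max_{0\le j\le t}S_j\topp N$. (The right-hand side of \eqref{q-damage} is a genuine probability distribution for any law of $X_0\topp N$: interchange the order of summation and use $\sum_{n=0}^{j}\rho^{2n}=[j+1]_{\rho^2}$.) Hence it is enough to compute the joint weak limit of $(\tfrac1{\sqrt N}S_{\floor{N\cdot}}\topp N,\,\tfrac1{\sqrt N}G\topp N)$ in $D[0,\infty)\times\RR$ and then compose with the map $\Psi(f,a)(t):=2\pp{\sup_{0\le s\le t}f(s)-a}_+-f(t)$.

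For the walk, \eqref{xi-gen} with $\rho=1-v/\sqrt N$ gives $\EE\,\xi_1=\frac{1/\rho-\rho}{\rho+\sigma+1/\rho}=\frac{2v}{(2+\sigma)\sqrt N}+O(1/N)$ and $\mathrm{Var}\,\xi_1=\frac{\rho+1/\rho}{\rho+\sigma+1/\rho}-(\EE\,\xi_1)^2\to\frac{2}{2+\sigma}$. The steps are bounded, so the Lindeberg condition holds automatically, and Donsker's invariance principle for triangular arrays yields $\tfrac1{\sqrt N}S_{\floor{Nt}}\topp N\toD\sqrt{\tfrac{2}{2+\sigma}}\,W_t+\tfrac{2v}{2+\sigma}\,t$ in $D[0,\infty)$; by Brownian scaling this limiting process equals $B\topp v$ in law, since $B_t\topp v=W_{2t/(2+\sigma)}\topp v=W_{2t/(2+\sigma)}+\tfrac{2v}{2+\sigma}t$ and $\{W_{2t/(2+\sigma)}\}_t\eqd\{\sqrt{\tfrac{2}{2+\sigma}}\,W_t\}_t$. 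Since $M_{\floor{Nt}}\topp N=\sup_{0\le s\le t}S_{\floor{Ns}}\topp N$ and the running-supremum functional on $D[0,\infty)$ is continuous at continuous paths, the continuous mapping theorem upgrades this to $\tfrac1{\sqrt N}(S_{\floor{N\cdot}}\topp N,M_{\floor{N\cdot}}\topp N)\toD(B\topp v,\{\sup_{0\le s\le t}B_s\topp v\}_t)$.

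For $G\topp N$, I would read off its tail from \eqref{q-damage}. Writing $[j+1]_q=(1-q^{j+1})/(1-q)$ and interchanging the two sums,
\[
\Pr\big(G\topp N\ge m\big)=\sum_{j\ge m}\frac{\Pr(X_0\topp N=j)}{[j+1]_q}\sum_{n=m}^{j}q^{n}=\EE\Big[\frac{q^{m}-q^{X_0\topp N+1}}{1-q^{X_0\topp N+1}}\,\indd{X_0\topp N\ge m}\Big].
\]
Putting $m=\lceil x\sqrt N\rceil$ and using that $q=(1-v/\sqrt N)^2$, so that $q^{\lceil x\sqrt N\rceil}\to e^{-2vx}$ and $q^{\lfloor y\sqrt N\rfloor+1}\to e^{-2vy}$, together with the hypothesis $X_0\topp N/\sqrt N\toD\mu$, this gives for $v\ne0$
\[
\Pr\big(\tfrac1{\sqrt N}G\topp N\ge x\big)\ \longrightarrow\ \int_{[x,\infty)}\frac{e^{-2vx}-e^{-2vy}}{1-e^{-2vy}}\,\mu(dy),
\]
and the corresponding $\int_{[x,\infty)}(1-x/y)\,\mu(dy)$ when $v=0$ (then $[j+1]_q=j+1$). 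Differentiating in $x$ identifies the right-hand side with $\int_x^\infty f_\mu\topp v(u)\,du$, so $\tfrac1{\sqrt N}G\topp N\toD\gamma$, where $\gamma$ has density \eqref{density} on $(0,\infty)$ and an atom at $0$ of mass $1-\int_0^\infty f_\mu\topp v(u)\,du$ (one checks that the integrands above lie in $[0,1]$, so these are legitimate).

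Finally, $G\topp N$ is independent of $\{S_t\topp N\}$ for each $N$, so the two marginal limits combine into a joint limit $(\tfrac1{\sqrt N}S_{\floor{N\cdot}}\topp N,\tfrac1{\sqrt N}G\topp N)\toD(B\topp v,\gamma)$ with independent coordinates. Composing with the running supremum and with $\Psi$---which is continuous at every point of $C[0,\infty)\times\RR$, hence almost surely at $(B\topp v,\gamma)$---the continuous mapping theorem yields \eqref{Weak-lim+}. The step I expect to be the main obstacle is the third one: making the passage to the limit in the tail of $G\topp N$ rigorous uniformly over the possibly heavy-tailed $\mu$, i.e.\ checking that the bounded integrands above, viewed as functions of $X_0\topp N/\sqrt N$, converge suitably (uniformly on compact sets, with tails harmless because the integrand is $\le1$) so that $X_0\topp N/\sqrt N\toD\mu$ can be applied, and handling the boundary $y=x$ and the atom at $0$. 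The Donsker step and the continuous mapping step are then routine.
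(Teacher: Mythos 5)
Your proposal is correct and follows essentially the same route as the paper: the $2(M-G)_+-S$ representation from Theorem \ref{Thm1}(I), Donsker's theorem for the triangular array of steps \eqref{xi-gen} (with the same mean/variance asymptotics), weak convergence of $G\topp N/\sqrt{N}$ to $\gamma$, and the continuous mapping theorem. The step you flag as the main obstacle is exactly what the paper isolates as Lemma \ref{Bill-T5.5} and Lemma \ref{lem-cont}; your variant --- computing $\Pr(G\topp N\geq m)$ in closed form from \eqref{q-damage}, noting the integrand is bounded by $1$ and converges continuously in $X_0\topp N/\sqrt N$, and identifying the limiting tail $\int_{[x,\infty)}\tfrac{e^{-2vx}-e^{-2vy}}{1-e^{-2vy}}\mu(dy)$ with $\int_x^\infty f_\mu\topp v(u)\,du$ (immediate by Fubini) --- is an equivalent way of carrying out that argument.
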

 (In particular, if  $\mu=\delta_0$, then $f_\mu\topp v\equiv 0$   and $\gamma= 0$ a.~s.)
The proof of Theorem \ref{T-YZ} is in Section \ref{T-YZ-proof}.

The following result is a direct consequence of  Theorem \ref{T-YZ} and Corollary \ref{C-BD-T2.8}.

\begin{corollary}\label{T-lim}
Let $u>0$, $v\leq 0$ be such that $u+v>0$. For $N=1,2,\dots$, let $\rho_0=1-u/\sqrt{N}+o(1/\sqrt{N})$, $\rho=1-v/\sqrt{N}+o(1/\sqrt{N})$.
Consider a sequence of Markov processes  $(Z_t\topp N)_{t=0,1,\dots}$ with  transition probabilities \eqref{Z-trans} and with initial law \eqref{Ini-rho}.
Then we have the following weak convergence in Skorokhod's space $D[0,\infty)$:
\begin{equation}\label{Weak-lim}
\frac{1}{\sqrt{N}}\pp{Z_{\floor {N t}}\topp N-Z_0\topp N}_{t\geq 0}\toD \pp{2 \pp{\sup_{0\leq s\leq t} B_s\topp v-\tfrac{1}{u+v}\gamma}_+-B_t\topp v}_{t\geq 0} \mbox{ as $N\to\infty$},
\end{equation}
    where $\gamma$ is an exponential random variable with parameter 1, $\pp{B_t\topp v}_{t\ge 0}$ and $\gamma$ are independent, $B_t\topp v=W_{2t/(2+\sigma)}\topp v$ with  $W_t\topp v=W_t+vt$ denoting the Wiener process with drift $v$.
\end{corollary}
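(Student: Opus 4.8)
\emph{Proof idea.} The plan is to combine the exact $2(M-G)_+-S$ representation of Corollary~\ref{C-BD-T2.8} with Donsker's invariance principle and the continuous mapping theorem. Fix $N$ large enough that $\rho_0=1-u/\sqrt N$ and $\rho_0\rho$ lie in $(0,1)$. If $v<0$ then $1<\rho<1/\rho_0$, the right inequality because $\rho\rho_0=1-(u+v)/\sqrt N+uv/N<1$ since $uv\le 0<u+v$; hence Corollary~\ref{C-BD-T2.8}(1) gives
\[
\ccbb{Z\topp N_t-Z\topp N_0}_{t=0,1,\dots}\eqd\ccbb{2\pp{M\topp N_t-G\topp N}_+-S\topp N_t}_{t=0,1,\dots},
\]
where $\ccbb{S\topp N_t}$ is the random walk with increment law \eqref{xi-gen} for the parameters $\rho=1-v/\sqrt N$ and $\sigma$, $M\topp N_t=\max_{0\le j\le t}S\topp N_j$, and $G\topp N\sim\mathrm{geo}(\rho_0\rho)$ is independent of the walk. (When $v=0$ the same identity holds by Remark~\ref{C-BD-T2.6}, since then $\rho_0\rho=\rho_0$ and the initial law \eqref{Ini-rho} is exactly the negative binomial \eqref{NB2}.)

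Next I would identify the scaling limits of the two independent ingredients. Computing the mean and variance of a single increment $\xi\topp N$ gives $\EE\,\xi\topp N=\tfrac{2v}{(2+\sigma)\sqrt N}+O(1/N)$ and $\Var\,\xi\topp N=\tfrac{2}{2+\sigma}+O(1/N)$; the increments being bounded, Donsker's theorem for triangular arrays yields $\ccbb{\tfrac1{\sqrt N}S\topp N_{\floor{Nt}}}_{t\ge0}\toD\ccbb{B\topp v_t}_{t\ge0}$ in $D[0,\infty)$ with $B\topp v_t=W\topp v_{2t/(2+\sigma)}$ precisely as in the statement. For the geometric variable, $\Pr(G\topp N\ge k)=(\rho_0\rho)^k$ and $\log(\rho_0\rho)=-(u+v)/\sqrt N+O(1/N)$, so $\tfrac1{\sqrt N}G\topp N\toD\tfrac1{u+v}\gamma$ with $\gamma$ a rate-one exponential. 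Since $G\topp N$ and $\ccbb{S\topp N_t}$ are independent for each $N$, these combine into the joint convergence $\pp{\ccbb{\tfrac1{\sqrt N}S\topp N_{\floor{Nt}}}_{t\ge0},\,\tfrac1{\sqrt N}G\topp N}\toD\pp{\ccbb{B\topp v_t}_{t\ge0},\,\tfrac1{u+v}\gamma}$ in $D[0,\infty)\times[0,\infty)$, with the two limiting coordinates independent.

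Finally I would push this through the functional $\Phi\colon(\omega,g)\mapsto\bigl(t\mapsto 2\bigl(\sup_{0\le s\le t}\omega(s)-g\bigr)_+-\omega(t)\bigr)$. Because $\sup_{0\le s\le t}\tfrac1{\sqrt N}S\topp N_{\floor{Ns}}=\tfrac1{\sqrt N}M\topp N_{\floor{Nt}}$ for the càdlàg step path, the representation above shows that $\Phi$ applied to the rescaled walk and $\tfrac1{\sqrt N}G\topp N$ has the law of $\tfrac1{\sqrt N}\ccbb{Z\topp N_{\floor{Nt}}-Z\topp N_0}_{t\ge0}$. The running-supremum map is $1$-Lipschitz for the local uniform norm, and $x\mapsto x_+$ and subtraction are Lipschitz, so $\Phi$ is continuous at every pair $(\omega,g)$ with $\omega$ continuous; since $B\topp v$ has continuous paths almost surely, the continuous mapping theorem applies and produces the limit in \eqref{Weak-lim}.

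The only delicate step is the last one: running suprema are not $J_1$-continuous on all of $D[0,\infty)$, but they are at continuous paths, and there $J_1$-convergence to the limit is the same as local uniform convergence, which is what makes $\Phi$ continuous along our sequence. The remaining ingredients — the triangular-array invariance principle, the geometric-to-exponential limit, and joint convergence of independent coordinates — are routine. An alternative is to avoid Corollary~\ref{C-BD-T2.8} and instead apply Theorem~\ref{T-YZ} directly with $\mu=\lim_N\mathcal L(Z\topp N_0/\sqrt N)$: by \eqref{Ini-rho} this $\mu$ is the law of a sum of independent exponentials of rates $u+v$ and $u-v>0$, and substituting its density into \eqref{density} and using $e^{-(u-v)y}-e^{-(u+v)y}=e^{-(u-v)y}(1-e^{-2vy})$ collapses the integral to $f_\mu\topp v(x)=(u+v)e^{-(u+v)x}$, i.e.\ $\gamma$ has the law of $\tfrac1{u+v}$ times a rate-one exponential and there is no atom at $0$, again giving \eqref{Weak-lim}.
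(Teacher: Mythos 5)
Your proposal is correct and follows essentially the same route as the paper: the representation from Corollary \ref{C-BD-T2.8}, the geometric-to-exponential limit $G\topp N/\sqrt N\toD\tfrac1{u+v}\gamma$, and Donsker's theorem combined with the (continuity of the) Pitman-type map — with your alternative via Theorem \ref{T-YZ} and the density computation \eqref{density} being exactly the other route the paper alludes to. You merely spell out details the paper leaves implicit, such as the continuous-mapping step and the boundary case $v=0$ handled via Remark \ref{C-BD-T2.6}.
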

\begin{proof} If $G\topp N$ is a sequence  of random variables with law  $\mathrm{geo}(\rho_0\rho)$ such that $\rho_0\rho_1=1-(u+v)/\sqrt{N}+o(1/N)$ then $G\topp N/\sqrt{N}\toD \tfrac{1}{u+v}\gamma$ where $\gamma$ is exponential with density $e^{-x}\ind_{x>0}$.
Indeed, $$\Pr\pp{\tfrac{1}{\sqrt{N}}G\topp N\geq x}=\pp{1-\tfrac{1}{\sqrt{N}}(u+v)+o(\tfrac1N)}^{\floor{x \sqrt{N}}}\to e^{-(u+v)x}.$$

The result now follows from formula \eqref{Pit-rep} which holds with $G=G\topp N$ by Corollary \ref{C-BD-T2.8}.
\end{proof}

  By Remark \ref{R-1/rho},  the law of the limiting process in \eqref{Weak-lim} is invariant with respect to the change $v\mapsto -v$.
Since $W$ and $-W$ have the same law,  we see that
this limit for $\sigma=2$ appeared as the non-Gaussian part of the (hypothetical) stationary measure of the KPZ fixed point on the half-line in \cite[(36), (37)]{barraquand2022steady}.
    We also remark that the latter of these formulas
    with their parameters $\wt u=-\wt v$  refers to  the
    process
    \begin{equation*}
        \left(2 \pp{\sup_{0\leq s\leq t}W_{s/2}\topp u-\tfrac{1}{2u}\gamma}_+-W_{t/2}\topp u \right)_{t\geq 0}, \quad u>0,
    \end{equation*}
    which, as they noted, %
    has the same law as  $\pp{W_{t/2}\topp{u}}_{t\geq 0}$, compare \cite[Theorem, 2.2]{williams1974path}. See Remark \ref{Rem-S} for a discrete version of this fact.

Next we present a continuous-time version of  Theorem \ref{Thm1}.
Consider the continuous-time Markov process $\pp{Z_t}_{t\geq 0}$ with transition probabilities
\begin{equation}
    \label{ZZ-trans}
\Pr(Z_t=dy|Z_s=x)%
= \begin{cases}
    \tfrac{ \sinh(v y)}{ \sinh(v x)}\;e^{-v^2 (t-s)/2}\,\g_{t-s}(x,y) dy & v\ne 0, \\
    \\
    \tfrac{ y}{ x}\; \g_{t-s}(x,y) dy & v=0,
\end{cases}
\end{equation}
 where $ x>0, y>0$ and  $\g_t(x,y)$ is given by
 \begin{equation}
  \label{eta:heat}\g_t(x,y)=\tfrac{2}{\pi}\int_0^\infty e^{-t u^2/2}\sin(x u)\sin(y u)d u = \tfrac{1}{\sqrt{2\pi t}} \Big(e^{-\frac{(x-y)^2}{ 2t}}-e^{-\frac{(x+y)^2}{ 2t}} \Big),\; x,y\geq 0,\; t>0.
\end{equation}

(This is a Bessel 3D  process which appears in  \cite{Bryc-Kuznetsov-2021}, where it is attributed to \cite{Rogers_Pitman:1981}.)
 By combining \cite[Theorem 2.6 and Theorem  2.8]{Bryc-Kuznetsov-2021} %
 if  $Z_0$ has the law
 \[\mu(dx)%
 =\begin{cases}
 \frac{u^2-v^2}{ v}  e^{-u x} \sinh (v x) {\mathbf 1}_{\{x>0\}}d x & v\ne 0 \\
 u^2 x  e^{-u x}  {\mathbf 1}_{\{x>0\}}d x & v=0
 \end{cases}\]
 with $u-v>0$ and $u+v>0$ and
 $\gamma$ has exponential law $\nu(dx)=(u+v) e^{-(u+v)x} \ind_{x>0}dx$,  then  the  process $ \pp{2 \pp{\sup_{0\leq s\leq t} W_s\topp v-\gamma}_+-W_t\topp v}_{t\geq 0}$ has the same law as $\pp{Z_t-Z_0}_{t\ge 0}$.
 One can check that then the law of $Z_0$ and the law of $\gamma$ are related by   \eqref{density}.
 \arxiv{Indeed,
\[
     2 v e^{-2 v x}\int_x^\infty \frac{u^2-v^2}{2 v}\frac{e^{-(u-v)y}-e^{-(u+v)y}}{1-e^{-2 v y}} dy
    =(u^2-v^2) e^{-2 v x}\int_x^\infty e^{-(u-v)y} dy
    = \frac{(u^2-v^2)}{u-v} e^{-(u+v)x}.
\]
 So in this case, $\gamma$ is exponential with parameter $u+v$.
 }
Here we use local convergence established in Lemma \ref{L-loc} to  extend  this observation to more general initial laws.
\begin{proposition}\label{AK}
    Let  $(Z_t)_{t\ge 0}$ be a Markov process with a transition probability function \eqref{ZZ-trans} for some $v\in\RR$ and
an absolutely continuous initial distribution $\mu(dy)=f(y)dy$ on $\RR_+$.
    Let $\gamma$ be a random variable with the law \eqref{density}, such that $\gamma$ and $\pp{W_t\topp v}_{t\ge 0}$ are independent.
    Then the process
    \begin{equation}
        \label{conclude}
        \pp{2 \pp{\sup_{0\leq s\leq t} W_s\topp v-\gamma}_+-W_t\topp v}_{t\geq 0}
    \end{equation}
    has the same law as $\pp{Z_t-Z_0}_{t\ge 0}$.
\end{proposition}

The proof of  Proposition \ref{AK} is in Section \ref{Sect:proofAK}.

The paper is organized as follows. In Section \ref{PThm1} we  give a reconstruction formula for the generalized Pitman transform,
identify the laws in \eqref{Pit-rep} and prove Theorems \ref{Thm1} and \ref{Thm1-2}. The elementary technical proof of the reconstruction formula is deferred to the appendix. %
In Section \ref{T-YZ-proof} we prove Theorem \ref{T-YZ} and Proposition \ref{AK}. Section \ref{Sect:4} contains several additional observations: we give a continuous-time version of Theorem \ref{Thm1}, we relate \eqref{q-thin} to damage models, we
compute a Poisson example and give an example which shows that the same limit \eqref{Weak-lim+}  may arise from the
initial laws of $X\topp N_0$ that are not tight.

\section{Proofs of Theorem \ref{Thm1} and Theorem \ref{Thm1-2}}\label{PThm1}
For $m,n\in\ZZ$, we denote
$\Zin{m,n}=\{m,m+1,\dots,n\}$ and $\llbracket m,\infty)=\{m,m+1,\dots\}$. %
We write $a\wedge b$ for $\min\{a,b\}$, and $a\vee b$ for $\max\{a,b\}$.

\subsection{The laws}
The equality in  \eqref{Pit-rep} is a statement about the equality of the finite-dimensional distributions of two discrete-time processes.
We shall therefore compute the finite-dimensional laws of the pair of processes that appear in \eqref{Pit-rep} and determine when they are equal.

 For  $t\in \llbracket 0,\infty)$ we consider the following discrete analog of the space of continuous functions  on the interval $[0,t]$
vanishing at $0$:
\[C\Zin{0,t}:=\ccbb{(x_j)_{0\leq j\leq t}: x_0=0,\quad x_j-x_{j-1}\in\{-1,0,1\},\quad 1\leq j\leq t }.\]
For $\vv x\in C\Zin{0,t}$, we write $\vv x=(x_0,x_1,\dots,x_t)$.

 We will need the following functions on $C\Zin{0,t}$: the   backwards running   minima   $K_j$, $0\leq  j\leq t$, and the count $H$ of the horizontal steps,
 \begin{equation}\label{K-def}
   K_j(\vv x)=\min_{ i\in\Zin{j, t}}x_i, \;\quad  H(\vv x)=\#\{ i\in\Zin{1,t}: x_i=x_{i-1}\}.
 \end{equation}
 It is clear that $K_0$ takes values in $\{0,-1,\dots,-t\}$ and $H\in\{0,1,\dots,t\}$.

  We begin with the left-hand side of \eqref{Pit-rep}.
 \begin{proposition}With $\vv x\in C\Zin{0,t}$,  $K:=K_0(\vv x)$, $H:=H(\vv x)$ for $\rho>0$ we have \begin{equation}
     \label{law-X}
     \Pr(X_1-X_0=x_1,\dots,X_t-X_0=x_t)=
     \tfrac{\sigma^{H}}{(\rho+1/\rho+\sigma)^t\rho^{x_t}}\sum_{k=-K}^\infty \Pr(X_0=k)
    \tfrac{[x_t+k+1]_{\rho^2}}{[k+1]_{\rho^2}}.
        \end{equation}
 \end{proposition}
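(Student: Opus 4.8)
The plan is to compute the finite-dimensional law of $\{X_t - X_0\}$ directly from the Markov transition probabilities \eqref{Z-trans}, conditioning on the value of $X_0$. First I would fix a trajectory $\vv x = (x_0, x_1, \dots, x_t) \in C\Zin{0,t}$ and an initial value $X_0 = k$. Since $\vv x$ records the increments, the actual trajectory of $X$ is $(k, k+x_1, \dots, k+x_t)$, and this stays in $\ZZ_{\geq 0}$ precisely when $k \geq -\min_{0 \leq j \leq t} x_j = -K$ (using $x_0 = 0$, so $K \leq 0$). For $k \geq -K$ the conditional probability $\Pr(X_1 - X_0 = x_1, \dots, X_t - X_0 = x_t \mid X_0 = k)$ is the product $\prod_{j=1}^t \Pr(X_{j} = k + x_j \mid X_{j-1} = k + x_{j-1})$, and each factor is $\Pr(\xi_1 = x_j - x_{j-1}) \cdot \frac{[k + x_j + 1]_q}{[k + x_{j-1} + 1]_q}$ by \eqref{Z-trans}. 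For $k < -K$ the probability is $0$ because the trajectory would leave $\ZZ_{\geq 0}$.

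Next I would simplify this product. The ratios of $q$-integers telescope: $\prod_{j=1}^t \frac{[k+x_j+1]_q}{[k+x_{j-1}+1]_q} = \frac{[k+x_t+1]_q}{[k+x_0+1]_q} = \frac{[k+x_t+1]_q}{[k+1]_q}$. For the product of the $\Pr(\xi_1 = \cdot)$ factors, I would count steps by type: there are $H = H(\vv x)$ horizontal steps contributing $\sigma/(\rho+1/\rho+\sigma)$ each, and if $u$ is the number of up-steps and $d$ the number of down-steps then $u + d + H = t$ and $u - d = x_t$, so the up/down factors together give $\frac{(1/\rho)^u \rho^d}{(\rho+1/\rho+\sigma)^{u+d}} = \frac{\rho^{-(u-d)}}{(\rho+1/\rho+\sigma)^{u+d}} = \frac{\rho^{-x_t}}{(\rho+1/\rho+\sigma)^{t-H}}$. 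Multiplying by the $\sigma^H$ from the horizontal steps, the conditional probability becomes $\frac{\sigma^H}{(\rho+1/\rho+\sigma)^t \rho^{x_t}} \cdot \frac{[k+x_t+1]_q}{[k+1]_q}$. Then I would sum over the initial law: $\Pr(X_1 - X_0 = x_1, \dots) = \sum_{k \geq -K} \Pr(X_0 = k) \cdot \frac{\sigma^H}{(\rho+1/\rho+\sigma)^t \rho^{x_t}} \cdot \frac{[k+x_t+1]_q}{[k+1]_q}$, which is exactly \eqref{law-X} after pulling the $k$-independent prefactor out of the sum.

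This argument is essentially bookkeeping, so I do not anticipate a serious obstacle; the one place to be careful is the boundary condition, namely verifying that $k \geq -K$ is exactly the condition for the trajectory $(k+x_j)_{0 \leq j \leq t}$ to remain in $\ZZ_{\geq 0}$ and that the transition formula \eqref{Z-trans} is only being applied at states in $\ZZ_{\geq 0}$, so that no spurious terms (e.g. from $[0]_q = 0$ in a denominator) appear. Since $K = \min_{0 \leq j \leq t} x_j$ and $\min_j (k + x_j) = k + K$, the trajectory is nonnegative iff $k + K \geq 0$, i.e. $k \geq -K$; and for such $k$ every intermediate state $k + x_{j-1} \geq k + K \geq 0$, so each denominator $[k + x_{j-1} + 1]_q \geq [1]_q = 1 > 0$ and the telescoping is legitimate. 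I would also note in passing that the two expressions in \eqref{Z-trans} (with $q$ and with $1/q$) agree, so the choice of writing the answer in terms of $q = \rho^2$ is harmless; this will matter when part (II) of Theorem~\ref{Thm1} is treated.
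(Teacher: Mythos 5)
Your proposal is correct and follows essentially the same route as the paper: condition on $X_0=k$, observe that only $k\geq -K$ contributes, telescope the $q$-integer ratios to $[x_t+k+1]_q/[k+1]_q$, and aggregate the step factors into $\sigma^{H}\rho^{-x_t}/(\rho+1/\rho+\sigma)^t$. Your bookkeeping via $u-d=x_t$ is just a minor variant of the paper's telescoping of $\rho^{x_{j-1}}/\rho^{x_j}$, and your extra care about the boundary condition and nonvanishing denominators is consistent with the paper's brief remark that $X_j\geq 0$ forces $X_0\geq -K$.
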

 \begin{proof}
   Since $X_j\geq0$, we  observe that the probability on the left-hand side of
    \eqref{law-X} is non-zero only when $X_0$ takes values in the set $\{-K,-K+1,\dots\}$.
    Thus \eqref{Z-trans} together with \eqref{xi-gen} yield
    \begin{multline}\label{2.3}\Pr(X_1-X_0=x_1,\dots,X_t-X_0=x_t)
     =\sum_{k=-K}^\infty \Pr(X_0=k)
    \prod_{j=1}^t \Pr(X_j=x_j+k|X_{j-1}=x_{j-1}+k)
    \\=\tfrac{\sigma^{H}}{(\rho+1/\rho+\sigma)^t}\sum_{k=-K}^\infty \Pr(X_0=k)
\prod_{j=1}^t \tfrac{\rho^{x_{j-1}}[x_j+k+1]_{\rho^2}}{\rho^{x_j}[x_{j-1}+k+1]_{\rho^2}}
    =\tfrac{\sigma^{H}}{(\rho+1/\rho+\sigma)^t\rho^{x_t}}\sum_{k=-K}^\infty \Pr(X_0=k)
    \tfrac{[x_t+k+1]_{\rho^2}}{[k+1]_{\rho^2}}.
    \end{multline}
 \end{proof}
  Next, we consider the right-hand side of \eqref{Pit-rep}.
Recall that $(S_t)$ is the random walk with steps \eqref{xi-gen},  $M_t=\max_{j\in\Zin{0,t}}S_j$ and $G$ is an independent random variable.

 \begin{proposition}\label{P-M-law}
 With $\vv x\in C\Zin{0,t}$,  $K:=K_0(\vv x)$, $H:=H(\vv x)$ we have
   \begin{multline}\label{2M-S-law}
     \Pr( 2(M_1-G)_+-S_1=x_1,\dots, 2(M_t-G)_+-S_t=x_t)
     \\
      =
     \tfrac{\sigma^{H}\rho^{x_t}}{(\rho+1/\rho+\sigma)^t}
       \pp{\Pr(G\geq -K) +\rho^{2(K-x_t)}[x_t-K]_{\rho^2} \Pr(G=-K)}.
   \end{multline}
 \end{proposition}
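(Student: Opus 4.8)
The plan is to condition on $G$, which reduces \eqref{2M-S-law} to a combinatorial description of the fibres of the discrete Pitman-type map. Write $\Phi_g\colon C\Zin{0,t}\to\ZZ^t$ for $\Phi_g(\vv s)=\pp{2(M_j(\vv s)-g)_+-s_j}_{1\le j\le t}$, where $M_j(\vv s)=\max_{0\le i\le j}s_i$. Since $G$ and $(S_0,\dots,S_t)$ are independent,
\begin{equation*}
\Pr\pp{2(M_1-G)_+-S_1=x_1,\dots,2(M_t-G)_+-S_t=x_t}=\sum_{g\ge0}\Pr(G=g)\,\Pr\pp{\Phi_g(S_0,\dots,S_t)=\vv x}.
\end{equation*}
A fixed $\vv s\in C\Zin{0,t}$ with $h$ horizontal steps has probability $\sigma^{h}\rho^{-s_t}/(\rho+1/\rho+\sigma)^t$, because its number of $+1$ steps minus its number of $-1$ steps equals $s_t$; and a one-step inspection shows that $\Phi_g$ turns a horizontal step of $\vv s$ into a horizontal step of $\vv x$ and conversely, i.e.\ $x_j=x_{j-1}\iff s_j=s_{j-1}$, so every $\vv s$ in a fibre of $\vv x$ has $h=H(\vv x)=H$. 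Hence
\begin{equation*}
\Pr\pp{\Phi_g(S_0,\dots,S_t)=\vv x}=\frac{\sigma^{H}}{(\rho+1/\rho+\sigma)^t}\sum_{\vv s:\,\Phi_g(\vv s)=\vv x}\rho^{-s_t},
\end{equation*}
and it remains to identify each fibre $\Phi_g^{-1}(\vv x)$ together with the endpoints $s_t$ of its elements; put $K=K_0(\vv x)\le0$.

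From $s_j\le M_j$ and $\min_{m\ge0}\pp{2(m-g)_+-m}=-g$ (attained only at $m=g$) we get $x_j=2(M_j-g)_+-s_j\ge-g$, so $\Phi_g^{-1}(\vv x)=\emptyset$ unless $g\ge-K$, and moreover $x_j=-g$ forces $s_j=M_j=g$. If $g>-K$ then $x_j\ge K>-g$ for all $j$; hence in any preimage the walk never stands at its running maximum while at level $g$, and since the first time it reaches level $g$ it would be at its running maximum (equal to $g$), it never reaches level $g$ at all. Thus $(M_j-g)_+\equiv0$, the only preimage is $\vv s=-\vv x$, and its endpoint is $-x_t$.

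The remaining case is $g=-K$. Choose a time $\theta$ at which $\vv x$ attains its minimum $K$. Then $s_\theta=M_\theta=g$, so $(M_j-g)_+=0$ for $j\le\theta$ and any preimage is forced to satisfy $s_j=-x_j$ on $\Zin{0,\theta}$; on $\Zin{\theta,t}$, the substitution $s'_i:=s_{\theta+i}-g$, $x'_i:=x_{\theta+i}-K$ turns the identity $\Phi_g(\vv s)=\vv x$ into $\vv x'=\Phi_0(\vv s')$, the ordinary lazy Pitman transform, where $\vv x'\in C\Zin{0,t-\theta}$ satisfies $x'_0=0$ and $x'_j\ge0$. This gives a bijection $\Phi_g^{-1}(\vv x)\leftrightarrow\Phi_0^{-1}(\vv x')$ under which $s_t=s'_{t-\theta}-K$. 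The only external input is the combinatorial core of Pitman's theorem: for $\vv x'\in C\Zin{0,T}$ with $x'_0=0$, $x'_j\ge0$, the fibre $\Phi_0^{-1}(\vv x')$ has exactly $x'_T+1$ elements, with pairwise distinct endpoints $-x'_T,-x'_T+2,\dots,x'_T$. (This is contained in the discrete $2M-X$ literature cited in the introduction; alternatively it follows by analyzing the forward reconstruction of $\vv s'$ from $\vv x'$, which is deterministic except at the steps where $x'$ exceeds the current running maximum, where it branches.) Since $x'_{t-\theta}=x_t-K$, the endpoints of $\Phi_{-K}^{-1}(\vv x)$ are the distinct values $-x_t,-x_t+2,\dots,x_t-2K$.

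Assembling: the inner sum $\sum_{\vv s:\,\Phi_g(\vv s)=\vv x}\rho^{-s_t}$ is $0$ for $g<-K$, is $\rho^{x_t}$ for $g>-K$, and for $g=-K$ equals $\sum_{i=0}^{x_t-K}\rho^{x_t-2i}=\rho^{x_t}\pp{1+\frac{1-\rho^{2(K-x_t)}}{\rho^2-1}}$, a finite geometric sum (this is also the correct value of the fraction when $\rho=1$). Multiplying by $\sigma^H/(\rho+1/\rho+\sigma)^t$, summing against $\Pr(G=g)$, and recombining the $g>-K$ terms with the $g=-K$ term via $\Pr(G\ge-K)=\Pr(G>-K)+\Pr(G=-K)$ gives precisely \eqref{2M-S-law}. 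I expect the fibre analysis for $g=-K$ to be the crux: deciding which walks map to a given $\vv x$, and above all that their endpoints form the progression $-x_t,\dots,x_t-2K$; once reduced to the classical Pitman bijection the rest is bookkeeping plus a geometric sum.
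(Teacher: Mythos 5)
Your proposal is correct, and it reaches \eqref{2M-S-law} by a genuinely different route than the paper. The paper never conditions on the value of $g$ case by case; instead it determines the entire inverse image of the map $\TT$ at once (Proposition \ref{Prop-T}): every preimage of $\vv x$ is of the explicit form $s\topp r_j=2(r\wedge K_j(\vv x)-K)-x_j$ built from the backwards running minima $K_j$, with $r=K$ (i.e.\ $\vv s=-\vv x$) available for every $g\ge -K$ and $r\in\Zin{K+1,x_t}$ only for $g=-K$; this is established by the self-contained Lemmas \ref{L2.4} and \ref{2.5}, and the step counts in \eqref{UDH2UDH} then feed the same geometric summation you perform. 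You instead fix $g$, show the fibre is empty for $g<-K$ and equals $\{-\vv x\}$ for $g>-K$ (your argument via $2(m-g)_+-m\ge -g$ and the first passage to level $g$ is sound), and for $g=-K$ you split at a time $\theta$ where $\vv x$ attains its minimum, note the preimage is forced to equal $-\vv x$ up to $\theta$, and shift so that the remaining segment reduces to the classical Pitman fibre over a nonnegative lazy path, for which you quote the count $x'_T+1$ and the list of distinct endpoints. This reduction-by-splitting is shorter and quite elegant, and your observations that horizontal steps match in both directions and that only the endpoint $s_t$ matters for a path's probability are exactly the ingredients the paper also uses; what your route costs is that the crux — the classical fibre description, including the lazy-step extension and the distinctness of endpoints — is imported from the literature (your forward-reconstruction branching argument is only a sketch), whereas the paper proves precisely this from scratch, in the stronger form describing $\TT^{-1}(\{\vv x\})$ for all $g$ simultaneously, which it also reuses (e.g.\ in Remark \ref{Rem-S}). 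The final assembly and geometric sum in your write-up agree with the paper's.
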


\begin{remark}\label{Rem-S}\sloppy
    Note that when $G\sim\mathrm{geo}(\rho^2)$, i.e., $\rho<1$,  then the right hand side in \eqref{2M-S-law}
     is ${\sigma^{H}\rho^{-x_t}}/{(\rho+1/\rho+\sigma)^t}$, %
    that is $\pp{2(M_t-G)_+-S_t}_{t\ge 0}%
    \stackrel{\calD}{=}\pp{ S_t}_{t\ge 0} $. %
    In fact, $\pp{2(M_t-G)_+-S_t}_{t\ge 0}\stackrel{\calD}{=}\pp{ S_t}_{t\ge 0}$ if and only if $G\sim\mathrm{geo}(\rho^2)$. Compare \cite[Corollary 3.14]{Croydon2023}. %

\end{remark}
\arxiv{Here is the proof of the equivalence. Suppose $\pp{2(M_t-G)_+-S_t}_{t\ge 0}\stackrel{\calD}{=}\pp{ S_t}_{t\ge 0}$. Then for $\rho\ne1$ we have
\[
\frac{\sigma^{H}\rho^{x_t}}{(\rho+1/\rho+\sigma)^t}
       \pp{\Pr(G\geq -K)+\rho^{2 (K-x_t)}[x_t-K]_{\rho^2}\,\Pr(G=-K)}=\frac{\sigma^{H}\rho^{-x_t}}{(\rho+1/\rho+\sigma)^t}.
\]
Taking sufficiently large $t$ above we conclude that for any $j=0,1,\ldots$ and any $x\in \Z$
\[
\Pr(G>j)+\rho^{-2(j+x)}[j+x+1]_{\rho^2}\Pr(G=j)=\rho^{-2x},
\]
where $q=\rho^2$. Subtracting side-wise two such equations for $j$ and $j-1$,  on noting that
$\rho^{-2(j+x)}[j+x+1]_{\rho^2}-1=\rho^{-2(j+x)}[j+x]_{\rho^2}$,  we arrive at
\[
\Pr(G=j)=\rho^2\Pr(G=j-1),\quad j=1,2,\ldots .
\]
}

 The proof of Proposition \ref{P-M-law}  relies  on the analysis of the mapping $\TT:\ZZ_{\geq 0}\times C\Zin{0,t}\to C\Zin{0,t}$, which is defined for $g\in\ZZ_{\geq 0}$ and $\vv s\in C\Zin{0,t}$   by the expression
\begin{equation}\label{def-T}
  \TT_g(\vv s):=\TT(g,\vv s) =\pp{2 \pp{\max_{i\in\Zin{0, j}} s_i-g}_+-s_j}_{0\leq j\leq t}.
\end{equation}
With $\vv S=(0,S_1,\dots,S_t)$, the left-hand side of \eqref{2M-S-law} is  $\Pr(\TT_G(\vv S)=\vv x)$.

We will also need two more functions on $C\Zin{0,t}$  that count the number of upward steps and downward steps:
\begin{equation}\label{UD-steps}
  U(\vv x)=\#\{j\in\Zin{1,t}: x_j-x_{j-1}=1\}, \quad   D(\vv x)=\#\{j\in\Zin{1,t}: x_j-x_{j-1}=-1\}.
\end{equation}
 We note the following obvious relations:
 \begin{equation*}
   U+D+H=t,\; U(\vv x)-D(\vv x)=x_t.
 \end{equation*}

It is clear that $\TT_g$ maps  $C\Zin{0,t}$ into itself and it turns out that the map
$\TT: \ZZ_{\geq 0}\times C\Zin{0,t}\to C\Zin{0,t}$ is onto. The main ingredient of our proof is the explicit description of the inverse images under $\TT$.

 It is well known that the Pitman transform $\TT_0$ appearing in Pitman's Theorem can be inverted using additional
information. In particular, a reconstruction result \cite[Proposition 2.2 (iv)]{biane2005littelmann} shows how to recover $\vv s$ from its Pitman transform image $\vv x=\TT_0(\vv s)$, with additional information about $\vv s$ given by the value of $r=\max \vv s$. The reconstruction formula becomes
$$\vv s(u)=2 \min\{r,\min_{v\in\Zin{u,t}}\vv x(v)\}-\vv x(u),$$
compare \cite[formula (6.3)]{herent2024discretetime}.
Recalling \eqref {K-def} and notation  $a\wedge b=\min\{a,b\}$, this reconstruction formula can be written as
\begin{equation}
    \label{rec0}
    s_j= 2 (r\wedge K_j(\vv x)) - x_j.
\end{equation}

With our $\TT$, which is  a mapping
$\ZZ_{\geq 0}\times C\Zin{0,t}\to C\Zin{0,t}$ given by formula \eqref{def-T}, our reconstruction formula uses additional information given by a slightly more complicated expression \eqref{s2r+} for
$r$, which depends   not only on $\max \vv s$   but also on $\vv x$.   (The special property of the Pitman transform $\TT_0$ that makes the reconstruction theorem simpler is positivity of the image, so that if $\vv x=\TT_0(\vv s)$, then $K_0(\vv x)=0$ and \eqref{s2r+} reduces to $r=\max \vv s$.)

For $r\in\Zin{K_0(\vv x),x_t}$, %
define $\vv s\topp r=\vv s\topp r(\vv x)\in C\Zin{0,t}$ by

 \begin{equation}
   \label{sr}
   s\topp r_j=2(r\wedge K_j(\vv x) -K_0(\vv x))-x_j,\quad \mbox{ where }     j\in\Zin{0,t}.
 \end{equation}
 Clearly, \eqref{sr} reduces to formula \eqref{rec0} when $K_0(\vv x)=0$.  Note that   formula   \eqref{sr} gives $s_0\topp r=0$ as well as $s\topp r_j-s\topp r_{j-1}\in\{-1,0,1\}$,
 so $\vv s\topp r$ is indeed in $C\Zin{0,t}$.

The proof of  our next result  gives the following reconstruction formula.
If $\vv s$ is such that $\vv x=\TT_g(\vv s)$ for some $g\in\ZZ_{\geq 0}$ and
\begin{equation}
    \label{s2r+}
    r:=\max \vv (\vv s+K_0(\vv x))_++K_0(\vv x)=\begin{cases}
  \max \vv s+2 K_0(\vv x)  & \max \vv s> -K_0(\vv x) \\
  K_0(\vv x) & \max \vv s\leq -K_0(\vv x),
\end{cases}
\end{equation}
then $\vv s=\vv s \topp r$ is given by \eqref{sr}, and there are two cases:
\begin{enumerate}[(i)]
    \item If $r>K_0(\vv x)$, then $g=-K_0(\vv x)$ is unique.
    \item If $r=K_0(\vv x)$, then $\vv x=\TT_g(\vv s)$ for any $g$ from the infinite set $\{ -K_0(\vv x), -K_0(\vv x)+1,\dots \}$.

\end{enumerate}
 \color{black}
\begin{proposition}\label{Prop-T}
\begin{enumerate}[(i)]
\item The mapping
$\TT: \ZZ_{\geq 0}\times C\Zin{0,t}\to C\Zin{0,t}$, as defined in \eqref{def-T}, is surjective (onto).
\item  The  inverse image  of  $\vv x\in C\Zin{0,t}$ under transformation $\TT$ is
\begin{equation}\label{Sets}
\ccbb{ \left(-K_0(\vv x),\vv s\topp r(\vv x)\right):
r\in\Zin{K_0(\vv x),  x_t}}\cup
\ccbb{ \left(g,\vv s\topp {K_0(\vv x)}(\vv x)\right): g\in\llbracket -K_0(\vv x)+1,\infty)},
\end{equation}
where
$\vv s\topp r(\vv x)$, $r\in \Zin{K_0(\vv x),x_t}$,    are distinct and given by \eqref{sr}.
\item   For $H$, $U$, $D$ defined in \eqref{K-def} and \eqref{UD-steps}, and   for $r\in\Zin{K_0(\vv x),x_t}$, we have
\begin{equation}\label{UDH2UDH}
H(\vv s\topp r (\vv x))=H(\vv x),\quad    U(\vv s\topp r (\vv x))= r-K_0(\vv x)+ D(\vv x),  \quad
 D(\vv s\topp r (\vv x))= K_0(\vv x)-r+U(\vv x).
\end{equation}
\end{enumerate}
\end{proposition}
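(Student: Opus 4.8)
The plan is to prove parts (i)--(iii) as follows.

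\smallskip
\textbf{Part (i) and the easy half of (ii).}
Part~(i) will be a consequence of part~(ii): once the set displayed in \eqref{Sets} is identified with the fiber $\TT^{-1}(\vv x)$, that fiber is in particular nonempty for every $\vv x\in C\Zin{0,t}$, so $\TT$ is surjective. One half of \eqref{Sets} is moreover transparent: since $K_0(\vv x)\le K_j(\vv x)$ for all $j$, formula \eqref{sr} gives $s\topp K_j=2\pp{K\wedge K_j-K}-x_j=-x_j$, i.e. $\vv s\topp K=-\vv x$; and $\TT_g[-\vv x]_j=2\pp{\max_{0\le i\le j}(-x_i)-g}_+ +x_j$ equals $x_j$ for every $j$ exactly when $g\ge\max_{0\le i\le j}(-x_i)$ for all $j$, i.e. when $g\ge-\min_i x_i=-K$. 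Hence $(g,\vv s\topp K)\in\TT^{-1}(\vv x)$ precisely for $g\in\llbracket -K,\infty)$.

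\smallskip
\textbf{The inclusion ``$\supseteq$'' in (ii).}
It remains to verify $\TT_{-K}[\vv s\topp r]=\vv x$ for each $r\in\Zin{K,x_t}$, for which I would establish a running‑maximum identity. Put $\delta_j:=(r\wedge K_j)-(r\wedge K_{j-1})$; because $K_{j-1}=\min(x_{j-1},K_j)\in\{K_j-1,K_j\}$ we have $\delta_j\in\{0,1\}$, and \eqref{sr} gives $s\topp r_j-s\topp r_{j-1}=2\delta_j-(x_j-x_{j-1})$. When $\delta_j=1$ a direct check yields $x_{j-1}=K_j-1$ and $x_j=K_j\le r$, so this step equals $+1$ (in particular $\vv s\topp r\in C\Zin{0,t}$). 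Thus between consecutive indices carrying $\delta=1$ the walk $\vv s\topp r$ mirrors $-\vv x$, and at the last such index $i_1\le j$ one has $x_{i_1}=K_{i_1}=r\wedge K_j$. An induction on $j$ then gives $\max_{0\le i\le j}s\topp r_i=r\wedge K_j-2K$ when $r\wedge K_j>K$, and $\max_{0\le i\le j}s\topp r_i=-\min_{0\le i\le j}x_i\le -K$ otherwise; in both cases $\pp{\max_{0\le i\le j}s\topp r_i+K}_+=r\wedge K_j-K$, which is precisely the identity turning $2\pp{\max_{0\le i\le j}s\topp r_i+K}_+-s\topp r_j$ into $x_j$. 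The walks $\vv s\topp r$ are pairwise distinct because $s\topp r_t=2(r-K)-x_t$ is strictly increasing in $r$.

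\smallskip
\textbf{The inclusion ``$\subseteq$'' in (ii): the core of the argument.}
I would analyze $\TT^{-1}(\vv x)$ through the value of $g$, writing $m_j:=\max_{0\le i\le j}s_i$. Evaluating $\TT_g[\vv s]_{j^*}=x_{j^*}$ at an index $j^*$ with $x_{j^*}=K$ and distinguishing $m_{j^*}\le g$ from $m_{j^*}>g$ forces $g\ge-K$: if $m_{j^*}\le g$ then $s_{j^*}=-K\le m_{j^*}\le g$; if $m_{j^*}>g$ then $s_{j^*}=2m_{j^*}-2g-K\le m_{j^*}$ gives $m_{j^*}\le 2g+K$, and with $m_{j^*}>g$ this yields $g>-K$. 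If $g>-K$, then $m_j\le g$ for all $j$: otherwise, taking $j$ the first index with $m_j>g$, one has $\TT_g[\vv s]_i=-s_i$ for $i<j$, so $s_i=-x_i$ and $m_{j-1}=-\min_{0\le i<j}x_i$; from $m_{j-1}<m_j\le m_{j-1}+1$ and $m_j>g$ we get $m_{j-1}=g$, whence $\min_{0\le i<j}x_i=-g<K$, contradicting $\min_i x_i=K$. Thus for $g>-K$ we have $\TT_g[\vv s]=-\vv s$, so $\vv s=-\vv x=\vv s\topp K$. For $g=-K$ the same reasoning shows any preimage $\vv s$ coincides with $-\vv x$ up to the first index $j_0$ with $m_{j_0}>-K$, that $x_{j_0-1}=K$, and that on $\Zin{j_0-1,t}$, rebased to start at $0$, the tail of $\vv s$ solves a transformation problem of the same form but with threshold $0$. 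Iterating this — equivalently, an induction on $t$ using the causality of $\TT$ together with $K_j(\vv x)=\min\big(K_j(x_0,\dots,x_{t-1}),x_t\big)$ for $j<t$ — pins $\vv s$ down to one of the $\vv s\topp r$ with $r\in\Zin{K+1,x_t}$ and forces $g=-K$. This recursive bookkeeping, matching the branching at each successive ``first exceedance'' with the choice of the parameter $r$, is the step I expect to be the main obstacle.

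\smallskip
\textbf{Part (iii).}
This is then read off from the step formula $s\topp r_j-s\topp r_{j-1}=2\delta_j-(x_j-x_{j-1})$, the bound $\delta_j\in\{0,1\}$, and the fact (already used above) that $\delta_j=1$ forces $x_j-x_{j-1}=1$: a flat step of $\vv s\topp r$ occurs at $j$ iff $x_j=x_{j-1}$ (which forces $\delta_j=0$), so $H(\vv s\topp r)=H(\vv x)$; an up‑step of $\vv s\topp r$ occurs either at a down‑step of $\vv x$ with $\delta_j=0$ or at an up‑step of $\vv x$ with $\delta_j=1$, so $U(\vv s\topp r)=D(\vv x)+\sum_{j=1}^t\delta_j=D(\vv x)+(r\wedge x_t)-(r\wedge K)=D(\vv x)+r-K_0(\vv x)$; and finally $D(\vv s\topp r)=t-H(\vv s\topp r)-U(\vv s\topp r)=U(\vv x)+K_0(\vv x)-r$.
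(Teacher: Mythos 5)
Most of what you wrote is correct, and in places cleaner than the paper's own argument: the identification of the pairs $(g,\vv s\topp K)$, $g\ge -K$, with $\vv s\topp K=-\vv x$; your step analysis $s\topp r_j-s\topp r_{j-1}=2\delta_j-(x_j-x_{j-1})$ with $\delta_j\in\{0,1\}$, which yields both $\vv s\topp r\in C\Zin{0,t}$ and a direct inductive proof of the running-maximum identity $\max_{0\le i\le j}(s_i\topp r+K)_+=r\wedge K_j(\vv x)-K$ (this is exactly Lemma \ref{L2.4}, which the paper proves by contradiction); distinctness via $s\topp r_t=2(r-K)-x_t$; the proof that any preimage has $g\ge-K$ and that $g>-K$ forces $\vv s=-\vv x$; and the telescoping count $\sum_j\delta_j=r-K$ which gives \eqref{UDH2UDH} more slickly than the paper's identification of the extra up-steps at last-visit indices.

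There is, however, a genuine gap at exactly the point you flag yourself: the case $g=-K$ of the inclusion of the fiber $\TT^{-1}(\{\vv x\})$ in the set \eqref{Sets}, i.e. the proof that \emph{every} $\vv s$ with $\TT_{-K}[\vv s]=\vv x$ equals some $\vv s\topp r$, $r\in\Zin{K,x_t}$. Your proposed recursion over successive ``first exceedance'' indices is only sketched: after rebasing at $j_0-1$ the sub-problem is again of the critical type (its threshold $0$ again equals minus the minimum of the rebased path -- a fact which itself needs an argument), so the easy case $g>-K$ never takes over, and the iteration would have to be set up as an honest induction whose bookkeeping matches the exceedance structure to a single parameter $r\in\Zin{K+1,x_t}$; none of this is carried out. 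Without it the completeness of the list \eqref{Sets} is unproved, and then the computation of the law in Proposition \ref{P-M-law} (hence Theorem \ref{Thm1}) is not justified, since additional preimages at $g=-K$ would change \eqref{2M-S-law}. The paper closes this case directly in Lemma \ref{2.5}: given such an $\vv s$, it \emph{defines} $r:=\max_{0\le i\le t}(s_i+K)_++K$ as in \eqref{s2r}, proves the pathwise identity $r\wedge K_j(\vv x)=\max_{0\le i\le j}(s_i+K)_++K$ for all $j$ (formula \eqref{min=max2}) by the same kind of integrality argument you used for \eqref{min=max}, and then inverts $x_j=2\max_{0\le i\le j}(s_i+K)_+-s_j$ to conclude $\vv s=\vv s\topp r$. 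Adapting your own induction for \eqref{min=max} to this situation, with $r$ defined from $\vv s$ rather than given, is the shortest way to complete your write-up.
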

The proof of Proposition \ref{Prop-T} is in Appendix \ref{sec:Prop-T}.

The following can be interpreted as a discrete version of Girsanov's theorem.
\begin{remark}\label{TBGW}
Fix $t\geq 1$ and $\sigma\geq 0$. Denote by ${\vv S}\topp {\rho}$ the (finite) random walk  $(0,S_1,\dots,S_t)$ with increments \eqref{xi-gen}. Then the law $\mathbb{P}_\rho$  of ${\vv S} \topp {\rho}$  is absolutely continuous with respect to the law of (the symmetric random walk) $\vv S\topp 1$ with the Radon-Nikodym derivative
\begin{equation}
    \label{Girsanov}
    \frac{d\mathbb{P}_\rho}{d\mathbb{P}_1}(\vv x)= \mathsf C \rho^{-x_t}, \quad \vv x\in C\Zin{0,t}.
\end{equation}

The normalizing constant is explicit: $\mathsf C=\frac{(2+\sigma)^t}{(\rho+1/\rho+\sigma)^t}$.
(For a more general case compare \cite[pg.  44]{lawler2010random})
\end{remark}

\begin{proof}[Proof of Remark \ref{TBGW}] We note that for $\vv x\in C\Zin{0,t}$,
  \begin{equation}\label{reg-G-1}
  \Pr(\vv S\topp \rho= \vv x)= \frac{ \sigma^{H(\vv x)} \left(1/\rho\right)^{U(\vv x)}\rho^{D(\vv x)}}{(\rho+1/\rho+\sigma)^t} =\frac{\sigma^{H(\vv x)}\rho^{-x_t}}{(\rho+1/\rho+\sigma)^t},
  \end{equation}
    which we use with $\rho>0$ and then with $\rho=1$.
\end{proof}
\begin{proof}[Proof of Proposition \ref{P-M-law}]
We use \eqref{UDH2UDH} and \eqref{Sets} of Proposition \ref{Prop-T}. We note that \eqref{UDH2UDH} gives
$$
H(\vv s\topp r)=H(\vv x)
\mbox{ and }
\vv s\topp r _t=2(r-K_0)-x_t.$$
Thus  with $\vv S=(S_0,\dots,S_t)$ we
  compute %
  \begin{multline*}
     \Pr\pp{\TT_G(\vv S)=\vv x}=
  \Pr\pp{(G,\vv S)\in \TT^{-1}(\{\vv x\}) }
  \\\stackrel{\eqref{Sets}}{ = }\Pr(\vv S=-\vv x)\sum_{g=-K_0}^\infty \Pr(G=g)+
  \Pr(G=-K_0)\sum_{r=K_0+1}^{x_t} \Pr(\vv S=\vv s \topp r (\vv x))
 \\
 \stackrel{\eqref{reg-G-1}}{=}\Pr(G\geq -K_0) \tfrac{\sigma^{H(\vv x)} \rho^{x_t}}{(\rho+1/\rho+\sigma)^t}
 +\Pr(G=-K_0)\sum_{r=K_0+1}^{x_t}
 \tfrac{\sigma^{H(\vv x)}  \rho^{x_t-2(r-K_0)}}{(\rho+1/\rho+\sigma)^t}
 \\=
\tfrac{\sigma^{H(\vv x)} \rho^{x_t}}{(\rho+1/\rho+\sigma)^t}\pp{\Pr(G\geq -K_0)
 +\Pr(G=-K_0)\sum_{r=K_0+1}^{x_t}
  \rho^{2(K_0-r)}}
   \\=
 \tfrac{\sigma^H \rho^{x_t}}{(\rho+1/\rho+\sigma)^t}\pp{\Pr(G\geq -K_0)
 +\rho^{2(K_0-x_t)}[x_t-K_0]_{\rho^2}\Pr(G=-K_0)}.
 \end{multline*}
\end{proof}
\subsection{Proofs of Theorems \ref{Thm1} and \ref{Thm1-2}}
 \begin{proof}[Proof of Theorem \ref{Thm1}: (i)$\Rightarrow$(ii)]

 We prove by induction with respect to $n$ that \eqref{q-damage} holds.

  By assumption, for every $t=1,2,\dots$ and every $\vv x\in C\Zin{0,t}$, the right hand side of
  \eqref{law-X} is the same as the right hand side of \eqref{2M-S-law}.
  Therefore,  for every $K\leq 0$ and $x_t\geq K$  we have  \begin{equation}\label{eq-K}
         \rho^{2x_t}\pp{\Pr(G\geq -K)
 +\rho^{2(K_0-x_t)}[x_t-K_0]_{\rho^2}\,\Pr(G=-K)}=\sum_{k=-K}^\infty \Pr(X_0=k)\tfrac{[x_t+k+1]_{\rho^2}}{[k+1]_{\rho^2}}.
  \end{equation}
  With $K=0$ and $x_t\ne 0$, in view of the identity $[x_t+k+1]_{\rho^2}-\rho^{2x_t}[k+1]_{\rho^2}=[x_t]_{\rho^2}$ we get
$$
 \Pr(G=0)
 =\frac{1}{[x_t]_{\rho^2}}\sum_{k=0}^\infty \Pr(X_0=k) \pp{\tfrac{[x_t+k+1]_{\rho^2}}{[k+1]_{\rho^2}} -\rho^{2x_t}}
=\sum_{k=0}^\infty  \
\tfrac{\Pr(X_0=k)}{[k+1]_{\rho^2}}.
$$
  Suppose \eqref{q-damage} holds for some $n\geq 0$.
Note that  for every $K\leq 0$ we can find $t\geq 1$ and  $\vv x\in C\Zin{0,t}$
  such that $K_0(\vv x)=K$ and {$x_t=0$}.
 (For example, take $\vv x=(0,0)$ and $t=1$ to get  $K=0$ and to get $K<0$ we can take $t=-2K$, $x_j=-j$
 for $j\in \{0,1,\ldots,-K\}$ and $x_j=2K+j$ for $j\in\{-K+1,-K+2,\ldots,2K\}$).
  Thus from \eqref{eq-K} for every $K\leq 0$ we get
   \begin{equation}\label{***1}
       \Pr(X_0\geq -K)
     = \Pr(G\geq -K)+\rho^{2K}[-K]_{\rho^2}\Pr(G=-K).
  \end{equation}
  Taking the difference of two  identities \eqref{***1} for the consecutive values of $K=-n$ and $K=-n-1$ we get
  \begin{equation*}
    \Pr(X_0=n)
    = \Pr(G=n)+\rho^{-2n}[n]_{\rho^2}\Pr(G=n)
    -\rho^{-2(n+1)}[n+1]_{\rho^2}\Pr(G=n+1).
  \end{equation*}
{
That is   \begin{equation*}
    \Pr(X_0=n)
    =  \pp{\rho^2\Pr(G=n)
    -\Pr(G=n+1)}\rho^{-2(n+1)}[n+1]_{\rho^2}.
  \end{equation*}
}

Thus by induction assumption
\begin{multline*}
 \Pr(G=n+1)=\rho^2 \Pr(G=n)-\tfrac{\rho^{2n+2}\Pr(X_0=n)}{[n+1]_{\rho^2}}=\rho^{2(n+1)} \sum_{m=n}^\infty \tfrac{\Pr(X_0=m)}{[m+1]_{\rho^2}}-\tfrac{\rho^{2n+2}\Pr(X_0=n)}{[n+1]_{\rho^2}}
 \\=  \rho^{2(n+1)}\sum_{m=n+1}^\infty \tfrac{\Pr(X_0=m)}{[m+1]_{\rho^2}}.
\end{multline*}
This ends the proof by induction.
  \color{black}

 \end{proof}
  \begin{proof}[Proof of Theorem \ref{Thm1}: (ii)$\Rightarrow$(i)]
   Suppose that the law of $G$ is \eqref{q-damage}.
     Taking into account formulas
  \eqref{law-X} and \eqref{2M-S-law}, we need to show that for every $K\leq 0$ and $x_t\geq K$ we have
  \begin{equation}
      \label{aux}
 \Pr(G\geq -K) +\rho^{2(K-x_t)}[x_t-K]_{\rho^2} \Pr(G=-K)  =  \rho^{-2x_t}\sum_{k=-K}^\infty \Pr(X_0=k)
    \tfrac{[x_t+k+1]_{\rho^2}}{[k+1]_{\rho^2}}.
  \end{equation}
in view of \eqref{q-damage} the left-hand side of \eqref{aux} is     \begin{multline*}
         \sum_{j=-K}^\infty \sum_{m=j} ^\infty \tfrac{\rho^{2j}}{[m+1]_{\rho^2}}\Pr(X_0=m)
 +\rho^{-2K}\,\sum_{m=-K}^\infty \tfrac{\Pr(X_0=m)}{[m+1]_{\rho^2}}\,\rho^{2(K-x_t)}[x_t-K]_{\rho^2}
\\ =
        \sum_{m=-K}^\infty \tfrac{\Pr(X_0=m)}{[m+1]_{q}}\pp{\sum_{j=-K}^m  \rho^{2j}+\rho^{-2x_t}[x_t-K]_{\rho^2}
}
  \\  =
       \rho^{-2x_t} \sum_{m=-K}^\infty \tfrac{\Pr(X_0=m)}{[m+1]_{q}} \left(\rho^{2x_t}[m+1]_{\rho^2}-\rho^{2x_t}[-K]_{\rho^2}+[x_t-K]_{\rho^2}\right).
  \end{multline*}
  Thus \eqref{aux} follows since
  $$
  [x_t-K]_{\rho^2}-\rho^{2x_t}[-K]_{\rho^2}=[x_t]_{\rho^2}\quad\mbox{and}\quad  \rho^{2x_t}[m+1]_{\rho^2}+[x_t]_{\rho^2}=[x_t+m+1]_{\rho^2}.
  $$
 \end{proof}

  \begin{proof}[Proof of Theorem \ref{Thm1-2}]

If $\rho<1$, then $-\inf_{u\ge 0}S_u\stackrel{\mathcal D}{=}-\inf_{u\ge t}(S_u-S_t)\stackrel{\mathcal D}{=}G\sim \rm{geo}(\rho^2)$.

 For any $t\in\{0,1,\ldots\}$ with $\vv S=(0,S_1,\dots,S_t)$ and $\vv x=(x_0,\dots,x_t)\in C\Zin{0,t}$
we have
\begin{multline*}
\Pr(\vv S =\vv x,\,\inf_{0\leq u<\infty}  S_u+V\geq0)=\Pr(\vv S =\vv x, V\ge -K,\, \inf_{u\geq t} (S_u-S_t)\geq -V-x_t)\\
=\Pr(\vv S =\vv x)\Pr(V\ge -K,\inf_{u\geq t} (S_u-S_t)\geq -V-x_t)
\\ =\tfrac{\rho^{-x_t}\sigma^{H}}{(\rho+1/\rho+\sigma)^t}\sum_{j=-K}^\infty
\Pr(V=j)(1-\rho^{2(j+x_t+1)})
=\tfrac{(1-\rho^2)\rho^{-x_t}\sigma^{H}}{(\rho+1/\rho+\sigma)^t}\sum_{j=-K}^\infty
\Pr(V=j)[j+x_t+1]_{\rho^2},
\end{multline*}
where $K=\min_{j\in\Zin{0,t}}x_j$, $H=\#\{j\in \Zin{1,t}: x_j=x_{j-1}\}$. Since for $t=0$ the above formula gives
\begin{equation*}
    \label{S+V}
    \Pr(\inf_{0\leq u<\infty}  S_u+V\geq0) = \ {(1-\rho^2)} c,
\end{equation*}
where $c=\sum_{j=0}^\infty\,\Pr(V=j)[j+1]_{\rho^2}$, we get
\begin{equation}
    \label{S+V2}\Pr\left(\vv S =\vv x\left|\inf_{0\leq u<\infty}  S_u+V\geq0\right.\right)=\tfrac{\rho^{-x_t}\sigma^{H}}{c(\rho+1/\rho+\sigma)^t}\sum_{j=-K}^\infty
\Pr(V=j)[j+x_t+1]_{\rho^2}.
\end{equation}

Taking into consideration \eqref{S+V2} and \eqref{2.3} we see that (i) is equivalent to
\begin{equation}\label{V_eq}
\sum_{j=-K}^\infty
\Pr(V=j)[j+x_t+1]_{\rho^2}=c\sum_{j=-K}^\infty
\Pr(X_0=j)\tfrac{[j+x_t+1]_{\rho^2}}{[j+1]_{\rho^2}}.
\end{equation}

Taking $t$ large enough in \eqref{V_eq} we conclude that for all $k\in \mathbb Z_{\ge0}$ and $x\ge k$ we have
\[
\sum_{j=k}^\infty
\Pr(V=j)[j+x+1]_{\rho^2}=c\sum_{j=k}^\infty
\Pr(X_0=j)\tfrac{[j+x+1]_{\rho^2}}{[j+1]_{\rho^2}}.
\]

Now, if (i) holds, we subtract sidewise the above equality for $k$ and $k+1$. Consequently,
\[
\Pr(V=k)=c\tfrac{\Pr(X_0=k)}{[k+1]_{\rho^2}},\quad k\in\mathbb Z_{\ge0},
\]
which proves (ii).

On the other hand if (ii) holds, then we have \eqref{V_eq}, and thus (i) is immediate.

 \end{proof}
\section{Proof of Theorem \ref{T-YZ} and Proposition \ref{AK}}\label{T-YZ-proof}

We will use the following fact which follows from the proof of
\cite[Theorem 5.5]{Billingsley1968}.

\begin{lemma}\label{Bill-T5.5}
Suppose that a sequence $\mu_N$ of probability measures on  $\RR_+$  converges to a probability measure $\mu$. For $L>0$ let $\Psi,\Psi_N:\RR_+\to [-L,L]$, $N=1,2,\ldots$, be measurable functions such that for all $u_N\to  u\in\R_+$ we have
\[\limn \Psi_{N}( u_N) = \Psi( u).%
\]
Then
\begin{equation*}\label{Bil:Ex6.6}
\lim_{N\to\infty} \int \Psi_N( u)\mu_N(d u)= \int \Psi( u)\mu(d  u).
\end{equation*}
\end{lemma}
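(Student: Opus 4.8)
The plan is to derive this from Skorokhod's representation theorem together with the bounded convergence theorem; it is the ``converging together'' form of the extended continuous mapping theorem that underlies the proof of \cite[Theorem 5.5]{billingsley1968}. Since $\RR_+$ is a Polish space and $\mu_N\to\mu$ weakly, Skorokhod's representation theorem furnishes a probability space $(\Omega,\calF,\Pr)$ carrying random variables $U,U_1,U_2,\dots$ with $U\sim\mu$, $U_N\sim\mu_N$ for every $N$, and $U_N\to U$ almost surely.

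Next I would fix $\omega$ in the probability-one event $\{U_N\to U\}$ and set $u_N=U_N(\omega)$ and $u=U(\omega)$, so that $u_N\to u$ in $\RR_+$. The hypothesis then gives $\Psi_N\bigl(U_N(\omega)\bigr)=\Psi_N(u_N)\to\Psi(u)=\Psi\bigl(U(\omega)\bigr)$, hence $\Psi_N(U_N)\to\Psi(U)$ almost surely; measurability of $\Psi_N$, $\Psi$, $U_N$, $U$ ensures that $\Psi_N(U_N)$ and $\Psi(U)$ are genuine random variables. Because $|\Psi_N|\le L$ everywhere, the sequence $\Psi_N(U_N)$ is uniformly bounded, so the bounded convergence theorem yields $\EE[\Psi_N(U_N)]\to\EE[\Psi(U)]$. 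Unwinding the laws, $\EE[\Psi_N(U_N)]=\int\Psi_N\,d\mu_N$ and $\EE[\Psi(U)]=\int\Psi\,d\mu$, which is the assertion.

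I do not expect a genuine obstacle: the hypothesis ``$\Psi_N(u_N)\to\Psi(u)$ whenever $u_N\to u$'' is exactly what transports the almost sure convergence $U_N\to U$ into almost sure convergence of the compositions, and uniform boundedness handles the passage to the limit under the integral sign. The one thing to be wary of is that one should \emph{not} try to split $\int\Psi_N\,d\mu_N-\int\Psi\,d\mu$ into $\int(\Psi_N-\Psi)\,d\mu_N$ plus $\int\Psi\,d(\mu_N-\mu)$ and apply weak convergence to the second term directly, since continuity of $\Psi$ is not among the hypotheses; the Skorokhod coupling bypasses this by producing $\Psi_N(U_N)\to\Psi(U)$ with no continuity input. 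The only remaining points are routine: the applicability of Skorokhod's theorem on $\RR_+$ and the measurability of the composed maps.
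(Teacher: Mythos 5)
Your proof is correct, and it takes a different route from the paper's. The paper disposes of the lemma by citing \cite[Theorem 5.5]{billingsley1968} (see also \cite[Exercise 6.6]{billingsley99convergence}) to conclude that $\int f(\Psi_N(u))\,\mu_N(du)\to\int f(\Psi(u))\,\mu(du)$ for every bounded continuous $f$, i.e.\ weak convergence of the pushforward laws of $\Psi_N$ under $\mu_N$ to that of $\Psi$ under $\mu$, and then extracts the convergence of the means by choosing a bounded continuous $f$ with $f(x)=x$ on $[-L,L]$, which is where the uniform bound $|\Psi_N|\le L$ enters. You instead bypass the mapping theorem altogether: Skorokhod's representation on the Polish space $\RR_+$ gives a coupling $U_N\to U$ a.s.\ with $U_N\sim\mu_N$, $U\sim\mu$, the continuous-convergence hypothesis transports this to $\Psi_N(U_N)\to\Psi(U)$ a.s., and bounded convergence finishes. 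The paper's argument is shorter given the reference and only needs the weak-convergence statement as a black box; yours is self-contained, makes explicit exactly where the hypothesis ``$\Psi_N(u_N)\to\Psi(u)$ for every $u_N\to u$'' is used (it is in fact slightly stronger than what Billingsley's theorem requires, which only asks that the exceptional set of such $u$ be $\mu$-null), and uses the bound $L$ only through dominated convergence rather than through the choice of the truncation function $f$. Your cautionary remark about not splitting the integral and invoking weak convergence against a possibly discontinuous $\Psi$ is apt, and the remaining points you flag (Skorokhod on $\RR_+$, measurability of the compositions) are indeed routine.
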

\arxiv{\begin{proof}
By \cite[Theorem 5.5]{Billingsley1968}, see also \cite[Exercise 6.6]{billingsley99convergence},
for every bounded continuous function $f$ on $\RR$ we have
\[\int f(\Psi_N( u))\mu_N(d  u)\to \int f(\Psi(u))\mu(d u).\]
Now take a bounded continuous function such that $f(x)=x$ on $[-L,L]$.
\end{proof}}

For a probability measure $\mu$ on $\RR_+:=[0,\infty)$ and   $x>0$, define
\begin{equation}\label{cont-unif}
   F_\mu\topp v(x)=\mu[0,x]+\frac{1}{g_v(x)}\int_{(x,\infty)}g_v(y) dy.
\end{equation}

\begin{lemma}\label{Lem-F} Function $F_\mu\topp v$ is  non-negative increasing.   It is differentiable at every point of continuity  of $\mu$ in $(0,\infty)$      with the derivative $\frac{d F_\mu \topp v(x)}{dx}=f_\mu \topp v(x)$ given by \eqref{density}.
Furthermore, $F(0+):=\lim_{x\searrow 0} F_\mu\topp v(x)$ exists, $F(0+)\ge 0$, and  $\lim_{x\to\infty}    F_\mu\topp v(x)=1$.
\end{lemma}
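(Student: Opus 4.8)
The plan is to derive from the definition \eqref{cont-unif} the single integral representation
\[
F_\mu\topp v(x)=\mu(\{0\})+\int_0^x f_\mu\topp v(s)\,ds,\qquad x>0,
\]
valid for every $v\in\RR$, and then read off all the assertions of the Lemma from it. Write $\Phi_v(t)=1-e^{-2vt}$ when $v\ne0$ and $\Phi_0(t)=t$, so that $\Phi_v(0)=0$, $\Phi_v'$ never changes sign, and $f_\mu\topp v(s)=\Phi_v'(s)\int_{[s,\infty)}\Phi_v(y)^{-1}\mu(dy)$. First I would record the elementary bound $0\le\Phi_v(x)/\Phi_v(y)\le1$ for $0<x\le y$ (because $\Phi_v(x)$ and $\Phi_v(y)$ carry the common sign of $v$, nonnegative when $v=0$, while $|\Phi_v|$ is increasing on $[0,\infty)$), and, using also $\Phi_v(x\wedge y)/\Phi_v(y)=1$ for $0<y\le x$, rewrite \eqref{cont-unif} as
\[
F_\mu\topp v(x)=\mu(\{0\})+\int_{(0,\infty)}\frac{\Phi_v(x\wedge y)}{\Phi_v(y)}\,\mu(dy),
\]
the point $y=0$ being split off because the kernel is singular there. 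Substituting $\Phi_v(x\wedge y)=\int_0^x\Phi_v'(s)\ind_{\{s<y\}}\,ds$ and applying Tonelli's theorem — licensed because $\Phi_v'(s)/\Phi_v(y)\ge0$ for $0\le s<y$ in each of the cases $v>0$, $v<0$, $v=0$ — converts the $\mu$-integral into $\int_0^x\Phi_v'(s)\big(\int_{(s,\infty)}\Phi_v(y)^{-1}\mu(dy)\big)\,ds$, and since the set of atoms of $\mu$ has Lebesgue measure $0$ the inner integral may be taken over $[s,\infty)$; the representation follows. Note also $\int_0^x f_\mu\topp v\le\mu((0,\infty))\le1$ by the ratio bound, so $f_\mu\topp v$ is locally integrable.

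Everything then follows at once. Since $f_\mu\topp v\ge0$ (by the same sign analysis of $\Phi_v'$ and $\Phi_v$), $F_\mu\topp v$ is non-negative and increasing, and $F(0+)=\mu(\{0\})\ge0$ exists. The same Tonelli computation with $x=\infty$ gives $\int_0^\infty f_\mu\topp v(s)\,ds=\int_{(0,\infty)}\Phi_v(y)^{-1}\Phi_v(y)\,\mu(dy)=\mu((0,\infty))=1-\mu(\{0\})$, whence $\lim_{x\to\infty}F_\mu\topp v(x)=1$. Finally, at a continuity point $x_0\in(0,\infty)$ of $\mu$ the function $f_\mu\topp v$ is continuous: it is the product of $\Phi_v'$, which is continuous, with the monotone function $x\mapsto\int_{[x,\infty)}\Phi_v(y)^{-1}\mu(dy)$, whose only discontinuities lie at the atoms of $\mu$. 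By the fundamental theorem of calculus $F_\mu\topp v$ is therefore differentiable at $x_0$ with derivative $f_\mu\topp v(x_0)$, which is the function in \eqref{density}.

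The only step that genuinely requires care — the ``main obstacle'', modest as it is — is the preliminary bookkeeping rather than any analytic estimate: one must check that the rewriting of \eqref{cont-unif} and the Tonelli interchange go through uniformly in the sign of $v$ (both factors of the kernel $\Phi_v'(s)/\Phi_v(y)$ flip sign as $v$ passes through $0$, so its non-negativity must be verified separately in each regime, with $v=0$ treated as the limiting case), and that the contribution of $y=0$ — where $\Phi_v(0)=0$ makes the kernel singular — is isolated correctly, since that is precisely the term which survives in $F(0+)$ and accounts for the possible atom of the limiting law at $0$.
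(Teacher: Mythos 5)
Your proof is correct, but it takes a genuinely different route from the paper. You first establish the absolutely continuous representation $F_\mu\topp v(x)=\mu(\{0\})+\int_0^x f_\mu\topp v(s)\,ds$ by writing $F_\mu\topp v(x)=\mu(\{0\})+\int_{(0,\infty)}\Phi_v(x\wedge y)\Phi_v(y)^{-1}\mu(dy)$ and applying Tonelli (the sign check in the three regimes $v>0$, $v<0$, $v=0$ and the a.e.\ identification of $\int_{(s,\infty)}$ with $\int_{[s,\infty)}$ are exactly the points that need care, and you handle them); monotonicity, $F(0+)=\mu(\{0\})\ge 0$, $\lim_{x\to\infty}F_\mu\topp v(x)=1$ (via $\int_0^\infty f_\mu\topp v=\mu((0,\infty))$), and differentiability at continuity points of $\mu$ (continuity of $f_\mu\topp v$ at non-atoms plus the fundamental theorem of calculus) then all drop out at once. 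The paper instead argues directly from the increment formula \eqref{Diff-F}: it checks monotonicity and continuity from that formula, computes the left and right difference quotients at a fixed $x_2>0$ by squeeze arguments that isolate the atom contribution $\mu(\{x_2\})$, and proves the limit at infinity by a separate bound. Your route is shorter and yields slightly more — absolute continuity of $F_\mu\topp v$ on $(0,\infty)$ and the explicit identification of the possible atom at $0$ as $\mu(\{0\})=1-\int_0^\infty f_\mu\topp v(x)\,dx$, which the paper only records implicitly after the lemma — at the cost of invoking Tonelli and the Lebesgue FTC; the paper's more hands-on computation stays at the level of elementary difference quotients and makes visible exactly how an atom of $\mu$ produces unequal one-sided derivatives, information that your argument discards. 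The only statements you leave to the reader — that $y\mapsto|\Phi_v(y)|$ is increasing (so the ratio bound holds and the dominations are licensed) and that the monotone function $x\mapsto\int_{[x,\infty)}\Phi_v(y)^{-1}\mu(dy)$ is continuous precisely off the atoms — are true and routine, so there is no gap.
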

We set $F_\mu\topp v(x)=0$ for $x<0$ and define $F_\mu\topp v(0)$ as the right-hand side limit. Then $F_\mu\topp v$ is a cumulative distribution function of a probability measure with density on $(0,\infty)$ and a possible atom at $0$ with mass $F_+(0)=1-\int_0^\infty f_\mu\topp v(x)dx$.
\begin{proof}[Proof of Lemma \ref{Lem-F}]
From \eqref{cont-unif} it is clear that $F_\mu\topp v(x)\geq 0$ as both terms are nonnegative.

In the proof we use several times the fact that $g_v$ is strictly decreasing on $(0,\infty)$ for all $v\in\RR$, as $g_v'(x)=- g_v^2(x)e^{-2 v  x}$.

    If $0<x_1<x_2$ then in view of \eqref{cont-unif} we have
    \begin{multline}\label{Diff-F}
   F_\mu\topp v(x_2)- F_\mu\topp v(x_1)\\= \left(\frac{1}{g_v(x_2)}-\frac{1}{g_v(x_1)}\right)\int_{(x_2,\infty)}g_v(y)\mu(dy)+
   \int_{(x_1,x_2)}\left(1-\frac{g_v(y)}{g_v(x_1)}\right)\mu(dy)+\left(1-\frac{g_v(x_2)}{g_v(x_1)}\right)\mu(\{x_2\}).
\end{multline}
Thus,  it is clear that $F_\mu\topp v(x_2)- F_\mu\topp v(x_1)\geq 0$ and furthermore that $F_\mu\topp v$ is continuous on  $(0,\infty)$.

 From \eqref{Diff-F} we compute the derivative at $x_2>0$ by computing
 \[\lim_{x_1\nearrow x_2} \tfrac{F_\mu\topp v(x_2)- F_\mu\topp v(x_1)}{x_2-x_1}
 \mbox{ and } \lim_{x_1\searrow x_2} \tfrac{F_\mu\topp v(x_2)- F_\mu\topp v(x_1)}{x_2-x_1}.\]
 It is clear that  the first term in  \eqref{Diff-F} gives
 $$
 - \frac{g_v'(x_2)}{g_v^2(x_2)}\int_{(x_2,\infty)}g_v(y)\mu(dy)   =e^{-2 v x_2}\int_{(x_2,\infty)}g_v(y)\mu(dy).
 $$
For the  second term
\begin{multline*}
    0\leq \frac{1}{x_2-x_1}\int_{(x_1,x_2)}\left(1-\frac{g_v(y)}{g_v(x_1)}\right)\mu(dy)
    \\ \leq
  \frac{1}{x_2-x_1}\int_{(x_1,x_2)}\left(1-\frac{g_v(x_2)}{g_v(x_1)}\right)\mu(dy)=
 \frac{g_v(x_1)-g_v(x_2)}{(x_2-x_1)g_v(x_1)}\mu((x_1,x_2))\to 0.
\end{multline*}

The third term converges to $-\frac{g'_v(x_2)}{g_v(x_2)}\mu(\{x_2\})$.
   Adding the two contributing terms we get
 \[\lim_{x_1\nearrow x_2} \tfrac{F_\mu\topp v(x_2)- F_\mu\topp v(x_1)}{x_2-x_1}= - \frac{g_v'(x_2)}{g_v^2(x_2)}\int_{[x_2,\infty)}g_v(y)\mu(dy)
 =e^{-2 v x_2}\int_{[x_2,\infty)}g_v(y)\mu(dy).
 \]
    Reversing the roles of $x_1,x_2$ from the same argument we get

      \[\lim_{x_1\searrow x_2} \tfrac{F_\mu\topp v(x_2)- F_\mu\topp v(x_1)}{x_2-x_1}=e^{-2 v x_2}\int_{[x_2,\infty)}g_v(y)\mu(dy).\]

To prove the last part of the conclusion, we note that
\[ 0\leq  \int_{(x,\infty)} \frac{g_v(y)}{g_v(x)}\mu(dy)\leq \mu(x,\infty)\to 0 \quad \mbox{as $x\to\infty$}.\]

Therefore, in view of \eqref{cont-unif}, we see that $\lim_{x\to\infty}    F_\mu\topp v(x)=1$.
\end{proof}

Next, we show that   convergence in distribution of $X_0\topp N/\sqrt{N}$ implies  convergence in distribution of $G\topp N/\sqrt{N}$  given by  \eqref{q-thin}.
\begin{lemma}[Continuity lemma] \label{lem-cont}  \sloppy Suppose $\rho_N=1- v/{\sqrt{N}}+o\left({1}/{\sqrt{N}}\right)$, $v\in\mathds R$, and let $G\topp N$  be given by  \eqref{q-damage}.    If
 \[\lim_{N\to\infty} \Pr\left(\tfrac{X_0^{(N)}}{\sqrt{N}}\leq x\right) =\mu[0,x]\]
for every point $x> 0$ of continuity of a probability measure $\mu$, then
\[\lim_{N\to\infty}\Pr\left(\tfrac{G\topp N}{\sqrt{N}}\leq x\right)= F_\mu\topp v(x)\] for all $x>0$, with  $F_\mu\topp v$ defined by \eqref{cont-unif}.
\end{lemma}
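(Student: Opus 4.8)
The plan is to rewrite the distribution function of $G\topp N/\sqrt N$ as an integral of a suitable bounded function against the law $\mu_N$ of $X_0\topp N/\sqrt N$, and then to pass to the limit using Lemma~\ref{Bill-T5.5}. Write $q_N=\rho_N^2=(1-v/\sqrt N)^2$ and $M_N=\floor{\sqrt N\,x}$, and recall that, by \eqref{q-thin}, $G\topp N$ has the marginal law \eqref{q-damage}, i.e. $\Pr(G\topp N=m)=q_N^{m}\sum_{j\ge m}\Pr(X_0\topp N=j)/[j+1]_{q_N}$. Since all terms are non-negative, Tonelli's theorem together with the identity $\sum_{m=0}^{k}q_N^m=[k+1]_{q_N}$ gives, after interchanging the two sums,
\begin{multline*}
\Pr\pp{G\topp N\le \sqrt N\,x}=\sum_{m=0}^{M_N}q_N^{m}\sum_{j\ge m}\frac{\Pr(X_0\topp N=j)}{[j+1]_{q_N}}\\
=\sum_{j\ge 0}\Pr(X_0\topp N=j)\,\frac{[(M_N\wedge j)+1]_{q_N}}{[j+1]_{q_N}}.
\end{multline*}
Setting $\Psi_N(u):=\dfrac{[(\floor{\sqrt N\,x}\wedge\floor{\sqrt N\,u})+1]_{q_N}}{[\floor{\sqrt N\,u}+1]_{q_N}}$ for $u\in\RR_+$, this says $\Pr(G\topp N\le\sqrt N x)=\int_{\RR_+}\Psi_N\,d\mu_N$. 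Because $[\cdot]_{q_N}$ is increasing in its integer argument for $q_N>0$, we have $0\le\Psi_N\le 1$.

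Next I would establish the pointwise convergence required by Lemma~\ref{Bill-T5.5}: if $u_N\to u\in\RR_+$ then $\Psi_N(u_N)\to\Psi(u)$, where, with the convention $\Psi(0):=1$,
\begin{equation*}
\Psi(u):=\begin{cases}\dfrac{1-e^{-2v(x\wedge u)}}{1-e^{-2vu}}=\min\pp{1,\tfrac{1-e^{-2vx}}{1-e^{-2vu}}}, & v\ne 0,\\[2ex] \dfrac{x\wedge u}{u}=\min\pp{1,\tfrac{x}{u}}, & v=0.\end{cases}
\end{equation*}
The key elementary input is that $q_N^{k_N}=(1-v/\sqrt N)^{2k_N}\to e^{-2va}$ whenever $k_N/\sqrt N\to a\ge 0$ (take logarithms: $2k_N\log(1-v/\sqrt N)=-2v\,k_N/\sqrt N+O(k_N/N)\to-2va$). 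Combining this with $[k+1]_{q_N}=(1-q_N^{k+1})/(1-q_N)$ for $q_N\ne1$ (and $[k+1]_1=k+1$), and applying it to $k_N=\floor{\sqrt N u_N}$ and to $k_N=\floor{\sqrt N x}\wedge\floor{\sqrt N u_N}=\floor{\sqrt N(x\wedge u_N)}$, yields $\Psi_N(u_N)\to\Psi(u)$ directly. I expect the only slightly delicate point here to be the bookkeeping at the boundary values $u=0$ (where the ratio degenerates, but where $\Psi_N(u_N)=1$ for all large $N$ because $\floor{\sqrt Nx}>\floor{\sqrt N u_N}$ eventually, and $\Psi(0)=1$) and $u=x$ (where the form of $x\wedge u$ changes but the limit is continuous), together with keeping the signs consistent simultaneously for $v>0$, $v<0$ and $v=0$.

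Finally, the hypothesis on $X_0\topp N/\sqrt N$ is precisely that $\mu_N$ converges weakly to $\mu$, so Lemma~\ref{Bill-T5.5} applied with $L=1$ yields $\int_{\RR_+}\Psi_N\,d\mu_N\to\int_{\RR_+}\Psi\,d\mu$. It then remains only to identify the limit: writing $\Psi(u)=\ind_{\{u\le x\}}+\ind_{\{u>x\}}\,\tfrac{1-e^{-2vx}}{1-e^{-2vu}}$ for $v\ne0$ (and with the fraction replaced by $x/u$ when $v=0$),
\begin{equation*}
\int_{\RR_+}\Psi\,d\mu=\mu[0,x]+\int_{(x,\infty)}\frac{1-e^{-2vx}}{1-e^{-2vu}}\,\mu(du)=F_\mu\topp v(x)
\end{equation*}
by \eqref{cont-unif}. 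Since the argument is carried out for each fixed $x>0$ (with $\Psi_N,\Psi$ depending on $x$), the conclusion holds at every $x>0$ with no restriction to continuity points of $\mu$ — consistently with the fact, recorded in Lemma~\ref{Lem-F}, that $F_\mu\topp v$ is continuous on $(0,\infty)$.
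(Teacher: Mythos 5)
Your proposal is correct, and it reaches the conclusion by a noticeably more direct route than the paper, although both arguments hinge on the same key tool, Lemma \ref{Bill-T5.5}. The paper proves convergence of $\EE\bb{h\pp{G\topp N/\sqrt N}}$ for continuous $h$ bounded by $1$ and vanishing on $[0,\delta]$, conditioning on $X_0\topp N$ to write this as $\int\Psi_N\,d\mu_N$, passing to the limit (with a separate dominated-convergence step for the inner Riemann sum when $v\ne 0$), identifying the limit as $\int h\,f_\mu\topp v\,dx$ via Fubini, and finally removing the restriction on $h$ and sandwiching indicators to upgrade to convergence of distribution functions. You instead compute $\Pr(G\topp N\le \sqrt N x)$ in closed form from the marginal law \eqref{q-damage}, using $\sum_{m=0}^{k}q^m=[k+1]_q$ to get the exact identity $\Pr(G\topp N\le\sqrt N x)=\int\Psi_N\,d\mu_N$ with $0\le\Psi_N\le 1$, verify the sequential continuity hypothesis (including the boundary cases $u=0$ and $u=x$, where your checks are correct since $\min(\sfloor a,\sfloor b)=\sfloor{\min(a,b)}$ and $\Psi$ is continuous at $x$), and identify the limit directly as $F_\mu\topp v(x)$ from \eqref{cont-unif}. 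This avoids the test functions, the density $f_\mu\topp v$, Fubini, and the final approximation step, treats $v=0$ and $v\ne 0$ essentially uniformly, and gives convergence at every $x>0$ without any continuity-point caveat — consistent with Lemma \ref{Lem-F}. What the paper's formulation buys in exchange is that it produces the limit law directly through its density \eqref{density}, which is the form quoted in Theorem \ref{T-YZ}; in your version that identification is delegated to Lemma \ref{Lem-F}, which is available in the paper anyway, so nothing is lost.
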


\begin{proof} Denote by $\mu_N$ the law of $X_0\topp N/\sqrt{N}$. Let $h:\RR_+\to[0,1]$ be a continuous function
such that $h(x)=0$ for $x\leq \delta$ for some $\delta>0$.

 By \eqref{q-thin} with $\rho=\rho_N$, we have
\begin{multline*}
    \EE\bb{h\pp{\tfrac{G\topp N}{\sqrt{N}}}}
=\EE\bb{\EE\bb{h\pp{\tfrac{G\topp N}{\sqrt{N}}}\middle| X_0\topp N}}
=\EE\bb{\frac{1}{[X_0\topp N+1]_{\rho_N^2}}\sum_{k=0}^{X_0\topp N}\,\rho_N^{2k}\, h\pp{\tfrac{k}{\sqrt{N}} }}
\\=\int_{\RR_+}\frac{\sqrt{N}}{[1+y\sqrt{N}]_{\rho_N^2}}\pp{\tfrac{1}{\sqrt{N}}\sum_{0\leq k\leq y\sqrt{N}}\rho_N^{2k} h\pp{\tfrac{k}{\sqrt{N}}}}\mu_N(dy)
=
\int_{\RR_+}\Psi_N(y)\mu_N(dy),
\end{multline*}
with  %
\begin{equation}\label{Psi-N}
    \Psi_N(y)=\tfrac{\sqrt{N}}{\left[1+y \sqrt{N}\right]_{\rho_N^2}}\int_0^y\psi_N(x)dx,
\end{equation}
where, using $h=0$ on $[0,\delta]$, for  $N>1/\delta^2$ we have
\begin{equation}\label{psi-N}
 \psi_N(x)=\sum_{1\leq  k\leq y\sqrt{N}}  \rho_N^{2k}   h\pp{\tfrac{k}{\sqrt{N}}}\ind_{\left(\tfrac{k-1}{\sqrt{N}},\tfrac{k}{\sqrt{N}}\right]}(x).
\end{equation}
With $k_N(x):=\ceil{ x \sqrt{N}}$ we have
$
\psi_N(x)= \rho_N^{2k_N(x)}  h\pp{{k_N(x)}/{\sqrt{N}}}\to e^{-2 v x} h(x)
$ for $x\in[0,y)$.
Since
$|\psi_N(x)|\leq C(y)=\|h\|_\infty e^{y(|v|+1)}$ for all $N$ large enough, using Lebesgue's dominated convergence theorem we get
\[
\lim_{N\to \infty}\int_0^y\psi_N(x)dx = \int_0^y e^{-2 v x} h(x) dx.
\]
Since   $$\tfrac{\sqrt{N}}{\left[1+y \sqrt{N}\right]_{\rho_N^2}}%
\to g_v(y),$$
therefore
\begin{equation*}
    \lim_{N\to\infty}\Psi_N(y)=\Psi(y):= g_v(y)\int_0^y e^{-2 v x} h(x) dx,
\end{equation*}
where we define $\Psi(0)=0$ by continuity (in fact, $\Psi(y)=0$ for  $y\in(0,\delta)$).

We now verify assumptions of  Lemma \ref{Bill-T5.5}.
From \eqref{psi-N}  we see that
$$
0\leq \int_0^y \psi_N(x)dx\leq  \frac{1}{\sqrt{N} }\|h\|_\infty \sum_{0\leq k \leq y\sqrt{N}}\rho_N^{2k} = \frac{1}{\sqrt{N}}\|h\|_\infty \left[1+\floor{y\sqrt{N}}\right]_{\rho_N^2} .
$$
Note that for any $a \ne 1$, $a>0$,  if $x\leq y$ then $(1-a^x)/(1-a^y)\leq 1$. Therefore, from \eqref{Psi-N}  we see that
 \[0\leq \Psi_N(y) \leq \frac{\left[1+\floor{y\sqrt{N}}\right]_{\rho_N^2}}{[1+y\sqrt{N}]_{\rho_N^2}}\|h\|_\infty\leq \|h\|_\infty.\]
Since $h=0$ on $[0,\delta]$, if $y_N\to 0$ then
$\Psi_N(y_N)=0 $ for $N$ large enough and $\Psi(0)=0$.
If $y_N\to y>0$ then $\Psi_N(y_N)\to \Psi(y)$ because
the first factor in \eqref{Psi-N} converges, and
$|\int_{y_N}^y \psi_N(dx)|\leq C(y) |y_N-y|\to 0$.

Thus by Lemma \ref{Bill-T5.5},
\[ \lim_{N\to\infty} \EE\bb{h\pp{\frac{G\topp N}{\sqrt{N}}}}=
 \int_{[0,\infty)}
 \Psi(y) \mu(dy)
 =\int_{[0,\infty)}
 g_v(y)\int_0^y e^{-2 v x} h(x) dx\; \mu(dy)
 =\int_0^\infty h(x)f_\mu\topp v(x)dx.\]
 (In the last equality we used Fubini's theorem.)

  The last integral is equal to  $\int_{\RR_+} h(x)F_\mu\topp v(dx)$, thus the convergence holds also for $1-h$ instead of $h$.
 This removes the condition $h(x)=0$ on interval $[0,\delta]$. Thus   weak convergence follows by standard arguments of sandwiching $\ind_{[0,x]}$ for $x>0$ between two such continuous functions $h$ that are arbitrarily close in $L_1(\RR_+,F_\mu\topp v(dx))$.
\end{proof}

\begin{proof}[Proof of Theorem \ref{T-YZ}]
  Let  $x>0$ be a continuity point of $\mu$. The key step in the proof is  convergence
  \begin{equation} \label{Key}
\lim_{N\to \infty}    \Pr\pp{\tfrac{G\topp N}{\sqrt{N}} \leq x}=
F_\mu\topp v(x),
  \end{equation}
  which holds true by Lemma \ref{lem-cont}.

By Donsker's theorem applied to the triangular array  $\left\{(\xi_i^{(N)})_{i\ge 1}\right\}_{N\ge 1}$ of row-wise i.i.d. random variables with the law of $\xi_i^{(N)}$  as given in \eqref{xi-gen} with $\rho=\rho_N$ and  $S_n\topp N=\sum_{i=1}^n\xi_i\topp N$, we have
   \[
  \tfrac{1}{\sqrt{N \Var(\xi\topp N)}} \pp{S_{\floor{Nt}}\topp N -\floor{Nt}\EE(\xi\topp N) }_{t\geq 0}\toD \pp{W_t}_{t\geq 0}.
   \]
   By direct calculations,   we get
  \[\EE(\xi_i\topp N)=\tfrac{1-\rho_N^2}{\rho_N ^2+\rho_N  \sigma +1}=
\tfrac{2 v}{\sigma +2}\tfrac{1}{\sqrt{N}} +O\left(\tfrac{1}{N}\right),\]
\[\Var(\xi_i\topp N)=\tfrac{\rho_N  \left(\rho_N ^2 \sigma +4 \rho_N +\sigma \right)}{\left(\rho_N ^2+\rho_N  \sigma +1\right)^2}
   =\tfrac{2}{\sigma +2} +O\left(\tfrac{1}{N}\right).\]
 Consequently
  \[\tfrac{1}{\sqrt{N } }\pp{S_{\floor{Nt}}}_{t\geq 0}\toD \pp{\sqrt{\tfrac{2}{2+\sigma}}W_t+\tfrac{2}{2+\sigma}v t}_{t\geq 0} \stackrel{\calD}{=} \pp{B_t\topp v}_{t\geq 0},
\]
where $B_t\topp v=W_{2t/(2+\sigma)}\topp v$ with  $W_t\topp v=W_t+vt$ denoting the standard Wiener process with drift $v$.

      Applying  \eqref{Pit-rep}, since  \eqref{Key} is  weak convergence, we get
  \begin{multline*}
        \tfrac{1}{\sqrt{N}}\pp{X_{\floor {Nt}}\topp N-X_0\topp N}_{t\geq 0}
     \stackrel{\calD}{=}\tfrac{1}{\sqrt{N}}\pp{2\pp{\max_{s\leq t} S_{\floor{Nt}}\topp N-G\topp N}_+-S_{\floor{Nt}}\topp N}_{t\geq 0}
\\   \toD \pp{2 \pp{\sup_{0\leq s\leq t} B_s\topp v-\gamma}_+-B_t\topp v}_{t\geq 0}
  \end{multline*}
  and  the result follows.
\end{proof}

\subsection{Proof of Proposition \ref{AK}}\label{Sect:proofAK}
We prove Proposition \ref{AK} by providing a discrete approximation to $(Z_t-Z_0)$.
 To ameliorate parity issues,  in the next result we use the even integer function $\ffloor{x}:=2\floor{x/2}$.
 \begin{lemma}\label{L-loc}
 Suppose $\rho_N=1-v/\sqrt{N}+o(1/\sqrt{N})$ and $\sigma=0$.
 Then for $x>0$, $y>0$ we have
 \begin{equation}
  \label{KPZ:fixed:rho-trans}
  \lim_{N\to\infty}\sqrt{N}\Pr\Big(X_{\ffloor{t N}}=\ffloor{y \sqrt{N}}  \Big| X_{\ffloor{s N}}\topp N=\ffloor{x \sqrt{N}}\Big)= 2\; \g_{t-s}(x,y)
  \begin{cases}
    \tfrac{ \sinh(v y)}{ \sinh(v x)}\;e^{-v^2 (t-s)/2} & v\ne 0 \\
    \tfrac{ y}{ x}  & v=0,
\end{cases}
\end{equation}
 where $\g_t(x,y)$ is given by  \eqref{eta:heat}. %
 \end{lemma}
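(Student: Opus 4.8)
The plan is to compute the left-hand side of \eqref{KPZ:fixed:rho-trans} directly from the explicit transition probabilities \eqref{Z-trans} of $\{X_t\}$, and then perform a local central limit theorem (LCLT) analysis. Fix $\rho=\rho_N=1-v/\sqrt N$, so $q=\rho^2=1-2v/\sqrt N+O(1/N)$. From \eqref{Z-trans}, the $n$-step transition probability from state $a$ to state $b$ factors as
\[
\Pr(X_{t+n}=b\mid X_t=a)=\frac{[b+1]_q}{[a+1]_q}\,\rho^{\,b-a}\;\mathcal{R}_n(a,b),
\]
where $\mathcal{R}_n(a,b)$ is the $n$-step transition probability of the simple random walk $\{S_t\}$ with steps \eqref{xi-gen} (with $\sigma=0$), i.e. $\mathcal{R}_n(a,b)=\Pr(S_n=b-a)$ for the symmetric-up-to-drift walk; this is the standard ``$h$-transform'' identity underlying \eqref{law-X}. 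Hence, with $a=\ffloor{x\sqrt N}$, $b=\ffloor{y\sqrt N}$ and $n=\ffloor{tN}-\ffloor{sN}\sim (t-s)N$,
\[
\sqrt N\,\Pr\bigl(X_{\ffloor{tN}}=\ffloor{y\sqrt N}\bigm|X_{\ffloor{sN}}=\ffloor{x\sqrt N}\bigr)
=\frac{[b+1]_q}{[a+1]_q}\,\rho^{\,b-a}\cdot\sqrt N\,\Pr(S_n=b-a).
\]

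Next I would handle the three factors separately. For the prefactor, $[m+1]_q=(1-q^{m+1})/(1-q)$, and with $q=1-2v/\sqrt N+O(1/N)$ and $m\sim z\sqrt N$ one gets $q^{m+1}\to e^{-2vz}$, so
\[
\frac{[b+1]_q}{[a+1]_q}\longrightarrow \frac{1-e^{-2vy}}{1-e^{-2vx}},
\qquad
\rho^{\,b-a}=\Bigl(1-\tfrac v{\sqrt N}\Bigr)^{\ffloor{y\sqrt N}-\ffloor{x\sqrt N}}\longrightarrow e^{-v(y-x)}.
\]
Multiplying, $\frac{1-e^{-2vy}}{1-e^{-2vx}}e^{-v(y-x)}=\frac{e^{vy}-e^{-vy}}{e^{vx}-e^{-vx}}=\frac{\sinh(vy)}{\sinh(vx)}$, which already produces the leading ratio in \eqref{KPZ:fixed:rho-trans}. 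The remaining work is the LCLT for $\sqrt N\,\Pr(S_n=b-a)$: the increments $\xi_j\topp N$ of $\{S_t\}$ have mean $\EE\xi=\tfrac{1-\rho^2}{\rho^2+1}=v/\sqrt N+O(1/N)$ (using $\sigma=0$) and variance $\Var\xi=\tfrac{2\rho}{(\rho^2+1)}\cdot\tfrac{\rho^2+1}{(\rho^2+1)}\to 1$ (again $\sigma=0$, cf.\ the computation in the proof of Theorem~\ref{T-YZ} specialized to $\sigma=0$). By a local limit theorem for this lattice walk on the even sublattice — which is exactly why the even-integer rounding $\ffloor{\cdot}$ is imposed, so that $b-a$ has the correct parity and the lattice span issue disappears — one obtains
\[
\sqrt N\,\Pr(S_n=b-a)\sim \frac{2}{\sqrt{2\pi\,\Var\xi\cdot n/N}}\,\exp\!\Bigl(-\tfrac{(b-a-n\EE\xi)^2}{2\,\Var\xi\cdot n}\Bigr),
\]
and plugging in $n/N\to t-s$, $\Var\xi\to1$, $(b-a)/\sqrt N\to y-x$, $n\EE\xi/\sqrt N\to v(t-s)$, this converges to
\[
\frac{2}{\sqrt{2\pi(t-s)}}\exp\!\Bigl(-\tfrac{(y-x-v(t-s))^2}{2(t-s)}\Bigr).
\]

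Finally I would assemble the pieces: the product of the limiting prefactor $\sinh(vy)/\sinh(vx)$ and the Gaussian limit must be rearranged into the claimed form $2\,\tfrac{\sinh(vy)}{\sinh(vx)}e^{-v^2(t-s)/2}g_{t-s}(x,y)$ with $g_t(x,y)$ as in \eqref{eta:heat}. This is a routine algebraic identity: expanding $(y-x-v(t-s))^2=(y-x)^2-2v(t-s)(y-x)+v^2(t-s)^2$ splits off the factor $e^{-v^2(t-s)/2}$, leaves a cross term $e^{v(y-x)}$, and the Gaussian in $(y-x)^2$; the cross term $e^{v(y-x)}$ recombines with $\sinh(vy)/\sinh(vx)$ — but one must be careful, because $g_t(x,y)=\tfrac1{\sqrt{2\pi t}}(e^{-(x-y)^2/2t}-e^{-(x+y)^2/2t})$ contains the antisymmetrized (image-charge) difference, not just the single Gaussian. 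The resolution is that the LCLT must be applied keeping the reflection: since $\{X_t\}$ is a walk conditioned (via the $h$-transform) to stay nonnegative, the correct $n$-step kernel for $S_n$ before the $h$-transform is the \emph{reflected} walk kernel $\Pr(S_n=b-a)-\Pr(S_n=-b-a-2)$ (reflection at $-1$), and it is this difference that produces the two Gaussians of $g_t$. I expect this reflection bookkeeping — getting the shift in the image term exactly right so that after multiplying by $\sinh(vy)/\sinh(vx)\cdot\rho^{b-a}$ the antisymmetry lands precisely on $e^{-(x-y)^2/2t}-e^{-(x+y)^2/2t}$ — to be the main obstacle; everything else is a standard LCLT plus elementary asymptotics. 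A clean way to avoid re-deriving the reflected LCLT from scratch is to note that $g_t(x,y)$ in \eqref{eta:heat} is already presented as a solution of the heat equation on the half-line with Dirichlet boundary conditions, and to identify the rescaled reflected walk kernel with its lattice analogue via the known invariance principle for Brownian motion killed at $0$, then transfer the local statement by the Gnedenko-type LCLT for bounded lattice increments.
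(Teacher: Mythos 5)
Your overall strategy (explicit kernel, reflection, local CLT) is the right one, but the central factorization on which everything rests is not correct, and the error is not cosmetic. Telescoping the one-step ratios in \eqref{Z-trans} (as in \eqref{2.3}, with $\sigma=0$) gives
\[
\Pr\big(X_{t+n}=b\mid X_t=a\big)=\tfrac{[b+1]_q}{[a+1]_q}\,\Pr\Big(S_n=b-a,\ \min_{0\le j\le n}S_j\ge -a\Big),
\]
i.e.\ there is \emph{no} factor $\rho^{\,b-a}$, and the walk kernel that appears is the one \emph{killed below} $-a$, not the free kernel: the $h$-transform is of the killed walk. Your spurious $\rho^{\,b-a}\to e^{-v(y-x)}$ is exactly what you use to turn $\tfrac{1-e^{-2vy}}{1-e^{-2vx}}$ into $\tfrac{\sinh(vy)}{\sinh(vx)}$; in the correct identity that conversion is instead done by the drift tilt $e^{v(y-x)}$ coming out of the local CLT, so in your computation that tilt has nothing to cancel it and the assembled limit is off by $e^{v(y-x)}$ (and the image term by $e^{-v(x+y)}$), so it does not reduce to $2\,\tfrac{\sinh(vy)}{\sinh(vx)}e^{-v^2(t-s)/2}g_{t-s}(x,y)$. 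The second problem is the patch you propose at the end: for a walk with drift the probabilistic reflection $\Pr(S_n=b-a)-\Pr(S_n=-b-a-2)$ is false; reflection is an identity for \emph{path counts} (all paths with fixed endpoints are equiprobable), and translated into probabilities the image term carries the weight $\rho^{2(a+1)}$, namely $\Pr(S_n=b-a,\ \min_j S_j\ge -a)=\Pr(S_n=b-a)-\rho^{2(a+1)}\Pr(S_n=a+b+2)$. You flag this bookkeeping as the ``main obstacle'' and leave it open, but it is precisely the content of the lemma's image term, so as written the proof has a genuine gap.

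With those two corrections your route does work and is essentially the paper's argument in $h$-transform clothing. The paper sidesteps the drift issue entirely: since $\sigma=0$, every nonnegative path from $x_0$ to $x_t$ has the same probability $\rho^{x_0-x_t}/(\rho+1/\rho)^t$ times the boundary ratio $\tfrac{\rho^{x_t+1}-\rho^{-x_t-1}}{\rho^{x_0+1}-\rho^{-x_0-1}}$, so the $n$-step probability is that weight times the count of nonnegative paths, which by the reflection principle for counts is $\binom{t}{(t+x_t-x_0)/2}-\binom{t}{(t+x_t+x_0+2)/2}$; Stirling then produces the two Gaussians of $g_{t-s}$, the boundary ratio gives $\sinh(vy)/\sinh(vx)$, and $\big(\tfrac{\rho+1/\rho}{2}\big)^{\ffloor{tN}}\to e^{v^2t/2}$ supplies the factor $e^{-v^2(t-s)/2}$ — a factor your drifted-LCLT version generates instead through the $(y-x-v(t-s))^2$ expansion, which is fine once the tilts actually cancel.
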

 \begin{proof}
It suffices to consider the case $s=0$.  From the calculations in \eqref{2.3} with $\sigma=0$ we see that
\[\Pr(X_1=x_1,\dots,X_t=x_t|X_0=x_0)
=\frac{\rho^{x_0-x_t}}{(\rho+1/\rho)^t} \frac{[x_t+1]_{\rho^2}}{[x_0+1]_{\rho^2}}.\]
does not depend on $x_1,\dots,x_{t-1}$.
Denote by $\calP$ the set of all Dyck paths $\vv x$ from $(0,x_0)$ to $(t,x_t)$ for
$t$ of the same parity as $x_t-x_0$.
Summing over all
paths in $\calP$  we get
\begin{multline*}
       \Pr(X_t=x_t|X_0=x_0)=  \#\calP \frac{\rho^{x_0-x_t}}{(\rho+1/\rho)^t} \frac{[x_t+1]_{\rho^2}}{[x_0+1]_{\rho^2}}
=
\frac{\binom{t}{(t+x_t-x_0)/2} -\binom{t}{(t+x_t+x_0+2)/2}}{(\rho+1/\rho)^t}\; \rho^{x_0-x_t} \frac{[x_t+1]_{\rho^2}}{[x_0+1]_{\rho^2}}.
\end{multline*}
We apply this formula to compute asymptotics of
$$  \sqrt{N}\Pr\Big(X_{\ffloor{t N}}\topp N=\ffloor{y \sqrt{N}}  \Big| X_{0}\topp N=\ffloor{x \sqrt{N}}\Big).$$
Since $\rho_N=1-v/\sqrt{N}+o(1/\sqrt{N})$ we have
\[
\lim_{N\to\infty}
\tfrac{1}{2^{\ffloor{t N}}}(\rho_N+1/\rho_N)^{\ffloor{t N}}=e^{t v^2/2}.
\]
By the Stirling formula
\[
\lim_{N\to\infty}\tfrac{\sqrt{N}}{2^{\ffloor{t N}}}
\begin{pmatrix}[1.05]
\ffloor{t N}
\\
\tfrac{\ffloor{t N}+\ffloor{y \sqrt{N}}-\ffloor{x \sqrt{N}}}{2}
\end{pmatrix}=\frac{\sqrt{2}}{\sqrt{\pi t}}e^{-\frac{(y-x)^2}{2t}}.
\]
Similarly,
\[\lim_{N\to\infty}\frac{\sqrt{N}}{2^{\ffloor{t N}}}
\begin{pmatrix}[1.05]
{\ffloor{t N}}
\\
{\frac{\ffloor{t N}+\ffloor{y \sqrt{N}}+\ffloor{x \sqrt{N}}+2}{2}}
\end{pmatrix}
=\frac{\sqrt{2}}{\sqrt{\pi t}}e^{-\tfrac{(y+x)^2}{2t}}.\]
Next we observe that
\[\lim_{N\to\infty} \rho_N^{\ffloor{x\sqrt{N}}-\ffloor{y\sqrt{N}}} = e^{v(y-x)}\]
and
\[
\lim_{N\to\infty} \frac{\left[ \ffloor{y\sqrt{N}}+1\right]_{\rho_N^2}}{\left[\ffloor{x\sqrt{N}}+1 \right]_{\rho_N^2}}
=\frac{g_v(x)}{g_v(y)},
\]
where  $g_v$ is given by \eqref{density}.
Thus
$$
\lim_{N\to\infty}  \sqrt{N}\Pr\Big(X_{\ffloor{t N}}\topp N=\ffloor{y \sqrt{N}}  \Big| X_{0}\topp N=\ffloor{x \sqrt{N}}\Big)
=2  e^{-t v^2/2} e^{v(y-x)}\frac{g_v(x)}{g_v(y)}\g_t(x,y).
$$
Since $g_0(x)=1/x$ and  $e^{v(y-x)}\frac{g_v(x)}{g_v(y)}= \tfrac{ \sinh(v y)}{ \sinh(v x)}$ if $v\ne 0$, this ends the proof.
 \end{proof}
{\begin{proof}[Proof of Proposition \ref{AK}]
 For $k\in\ZZ_{\geq 0}$ define
\[p_{2k}\topp N:= \int_{2k/\sqrt{N}}^{(2k+2)/\sqrt{N}}f(x)dx,\] so that
$\lim_{N\to\infty}\frac{\sqrt{N}}{2} p_{\ffloor{x\sqrt{N}}}\topp N=f(x)$ outside a set of Lebesgue measure 0. (For the Lebesgue density theorem, see, e.g. \cite[Theorem 7.10]{rudin1987real}
or \cite{Morayne1989} for a martingale proof.)

Consider a family of Markov processes $\pp{X_t\topp N}_{t=0,1,\ldots}$ with the initial law $\Pr(X_0\topp N=2k)=p_{2k}\topp N$, $k\in\ZZ_{\geq 0}$, and with transition probabilities \eqref{Z-trans} that depend on $N$ through $\rho=\rho_N=1-v/\sqrt{N}+o(1/\sqrt{N})$.

    For fixed $t_0=0<t_1<\dots<t_d$ and $x_0,\dots,x_d\geq 0$, by \eqref{KPZ:fixed:rho-trans} and Markov property we have
    \begin{multline*}
      \lim_{N\to\infty}\tfrac{ N^{(d+1)/2}}{2^{d+1}} \Pr\pp{\tfrac{X_0\topp N}{\sqrt{N}}=\tfrac{\ffloor{x_0\sqrt{N}}}{\sqrt{N}},
      \tfrac{X_{\ffloor{t_1N}}\topp N}{\sqrt{N}}=\tfrac{\ffloor{x_1\sqrt{N}}}{\sqrt{N}},\dots, \tfrac{X_{\ffloor{t_dN}}\topp N}{\sqrt{N}}=\tfrac{\ffloor{x_d\sqrt{N}}}{\sqrt{N}}}
\\= f(x_0)\prod_{j=1}^d \g_{t_j-t_{j-1}}(x_{j-1},x_j).
    \end{multline*}
By \cite[Theorem 3.3]{billingsley99convergence}, this implies
 convergence of finite dimensional distributions,
 \[N^{-1/2}\pp{X_{\ffloor{N t}}\topp N}_{t\in [0,\infty)}
 \fddto\pp{Z_t}_{t\in[0,\infty)} \mbox{ as $N\to\infty$.}\]
 To conclude the proof, we invoke Theorem \ref{T-YZ}    with $\sigma=0$. According to this theorem,
 $ N^{-1/2}\pp{X_{\ffloor{N t}}\topp N-X_0\topp N}_{t\in [0,\infty)}$ converges to
\eqref{conclude}.
\end{proof}
}
\section{Additional observations and remarks}\label{Sect:4}
 \subsection{Examples related to Theorem \ref{T-YZ}}
Here we show that the same limit \eqref{Weak-lim+} may arise from the initial laws of $X_0\topp N$ that are not tight.
  It is clear that if $X_0=n$   then $G$ defined through \eqref{q-thin} is supported on the set $\{0,1,\dots,n\}$. This leads to convergence in
Theorem \ref{T-YZ} with deterministic $X_0\topp N$.
Suppose $X_0\topp N =\floor{ N^{1/2+\eps}}$ with $\eps>0 $ while $\rho=1-v/\sqrt{N}$.  Then   as $N\to\infty$, \eqref{q-thin}  gives
\begin{equation*}
\Pr(G\topp N\leq x \sqrt{N})=
\tfrac{\rho^{2\floor{x \sqrt{N}}+1}-1}{\rho^{2\floor{N^{1/2+\eps}}+1}-1}
\to \begin{cases}
1-e^{-2 v x}  & \eps>0, v>0, x\geq 0, \\
\\
\tfrac{1-e^{-2 v x}}{1-e^{-2 v }} & \eps=0, v\ne 0,  x\in[0,1],
\\ \\
x &\eps=0,  v=0, x\in[0,1].

\end{cases}
\end{equation*}
Therefore, we
get
\begin{equation*}\label{Weak-lim*}
\frac{1}{\sqrt{N}}\pp{X_{\floor {N t}}\topp N-X_0\topp N}_{t\geq 0}\toD \pp{2 \pp{\sup_{0\leq s\leq t} B_s\topp v-\gamma}_+-B_t\topp v}_{t\geq 0} \mbox{ as $N\to\infty$},
\end{equation*}
where, depending on $\eps$,   the independent random variable $\gamma$ has  exponential, truncated exponential, or uniform density
\begin{equation*}\label{4.3}
    f_\gamma(x) = \begin{cases}
    2 v e^{-2vx}\ind_{x\geq 0}  & \eps>0, v>0,  \\
\\
\tfrac{2v}{1-e^{-2 v }}e^{-2 v x}\ind_{x\in[0,1]} & \eps=0, v\ne 0,
\\ \\
\ind_{x\in[0,1]} &\eps=0,  v=0.

\end{cases}
\end{equation*}
The case $\eps=0$ is covered by Theorem \ref{T-YZ} but the case $\eps>0$ is not, and  gives the same limit as \eqref{Weak-lim} with $u=v$.

\subsection{A comment on damage models}\label{sec:dmg}

  Let $(N,R)$ be a pair of nonnegative integer-valued random variables. Assume that the conditional distribution $\Pr_{R|N}$ with support $\{0,1,\ldots,N\}$ is specified by a function $(s(r|n))_{r\in\{0,\ldots,n\}}$, $n=0,1,\ldots$. Then $(N,R)$ is called the damage model generated by $s$, where $R$ represents the number of surviving observations when the original observation $N$ is subject to a destructive process, while $D=N-R$ is the damaged part of $N$.
  The popular damage model is when the survival distribution is binomial, that is, with $s(r|n)=\tbinom{n}{r}\,p^r(1-p)^{n-r}$, with $p\in(0,1)$. It is well known that in the binomial damage model if $N$ is a Poisson random variable then $R$ and $D$ are independent and both have Poisson distributions. It is also known that if in the binomial damage model random variables $R$ and $D$ are independent, then $N$ is necessarily a Poisson random variable. Actually, it was shown in \cite{rao1964characterization}  that in the binomial damage model a weaker assumption of partial independence, known as the Rao-Rubin condition,
  \begin{equation}\label{RR}
\Pr_{R|D=0}=\Pr_R,
  \end{equation}
implies that $N$ is a Poisson random variable.

  In this context, formula \eqref{q-thin} for $q=\rho^2$ defines a damage model with function
   \begin{equation}\label{qthin}
  s(r,n)=\tfrac{q^r}{[n+1]_q},\quad r=0,1,\ldots,n\ge 0.
  \end{equation}

  Corollary \ref{C-BD-T2.8} can be interpreted as follows.
 If $N$ is a $q$-negative binomial as given in \eqref{dunkl}, then the damaged $D$ and undamaged $R$ parts of $N$ are independent and follow respectively $\mathrm{geo}(q\theta)$ and $\mathrm{geo}(\theta)$ distributions. It appears that under \eqref{qthin} the Rao-Rubin condition implies that $N$ is of the form given in \eqref{dunkl}.   More precisely, we have the following characterization result.
 \begin{proposition}
Let $R$ and $N$ be non-negative integer valued random variables such that $\Pr(R=r|N=n)=s(r,n)$ as in \eqref{qthin}, $q>0$. Assume that $\Pr(D=0)=1-\theta\in(0,1)$ and $q\theta\in(0,1)$.

If the Rao-Rubin condition \eqref{RR} holds, then $R\sim \mathrm{geo}(q\theta)$ and $D=N-R\sim\mathrm{geo}(\theta)$
are independent, that is, $N$ is $q$-negative binomial as given in \eqref{dunkl}.
  \end{proposition}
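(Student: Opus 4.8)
The plan is to turn the Rao--Rubin identity into a one-step recursion for the tail sums of a reweighting of the law of $N$, and then to recover the two free parameters of \eqref{dunkl} from normalization.

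First, write $p_n:=\Pr(N=n)$ and $a_n:=p_n/[n+1]_q$. From $\Pr(R=r\mid N=n)=s(r,n)=q^r/[n+1]_q$ for $0\le r\le n$ one computes
$$\Pr(R=r)=\sum_{n\ge r}p_n\,\frac{q^r}{[n+1]_q}=q^r\sum_{n\ge r}a_n,\qquad \Pr(R=r,D=0)=\Pr(N=R=r)=p_r\,\frac{q^r}{[r+1]_q}=q^r a_r,$$
and $\Pr(D=0)=\sum_{n\ge0}q^n a_n=1-\theta$. Put $T_r:=\sum_{n\ge r}a_n$; this is finite because $[n+1]_q\ge1$ gives $T_0\le\sum_n p_n=1$. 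Dividing and cancelling $q^r$, the Rao--Rubin condition \eqref{RR}, i.e. $\Pr(R=r\mid D=0)=\Pr(R=r)$, reads simply $a_r/(1-\theta)=T_r$ for every $r\ge0$.

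Next, since $a_r=T_r-T_{r+1}$, this becomes $T_r-T_{r+1}=(1-\theta)T_r$, that is $T_{r+1}=\theta T_r$, whence $T_r=\theta^r T_0$ and $a_r=(1-\theta)\theta^r T_0$. Therefore $p_r=[r+1]_q a_r=(1-\theta)T_0\,[r+1]_q\,\theta^r$. Summing over $r$, using the hypothesis $q\theta\in(0,1)$ and the identity
$$\sum_{r\ge0}[r+1]_q\,\theta^r=\frac1{1-q}\Big(\frac1{1-\theta}-\frac{q}{1-q\theta}\Big)=\frac1{(1-\theta)(1-q\theta)}$$
(valid also at $q=1$ by continuity), the requirement $\sum_r p_r=1$ forces $T_0=1-q\theta$. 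Hence $p_r=[r+1]_q\,\theta^r(1-\theta)(1-q\theta)$, which I recognize as the $q$-negative binomial law \eqref{dunkl}. Finally, with this $p_r$ one checks directly --- this is the very computation already carried out in the proof of Corollary \ref{C-BD-T2.8} --- that $\Pr(R=r,D=d)=p_{r+d}\,q^r/[r+d+1]_q=(1-q\theta)(q\theta)^r\cdot(1-\theta)\theta^d$, so that $R\sim\mathrm{geo}(q\theta)$ and $D=N-R\sim\mathrm{geo}(\theta)$ are independent, completing the proof.

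The argument has essentially no obstacle; the only points requiring care are bookkeeping ones: ensuring the tail sums $T_r$ are finite before manipulating them (immediate from $[n+1]_q\ge1$), and noting that convergence of $\sum_r[r+1]_q\theta^r$ --- exactly where $q\theta<1$ is used --- is what pins down $T_0$, and thus the second parameter of \eqref{dunkl}. The substance of the proof is the single observation that Rao--Rubin makes $a_r$ proportional to its own tail sum, which telescopes to a geometric tail.
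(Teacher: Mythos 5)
Your proof is correct and follows essentially the same route as the paper: rewriting the Rao--Rubin condition as the identity $\Pr(N=r)/[r+1]_q=(1-\theta)\sum_{n\ge r}\Pr(N=n)/[n+1]_q$ and differencing/telescoping in $r$ to obtain geometric decay of $\Pr(N=r)/[r+1]_q$. The only difference is that you carry out explicitly what the paper leaves implicit, namely pinning down the normalizing constant via $\sum_r[r+1]_q\theta^r=\tfrac1{(1-\theta)(1-q\theta)}$ and verifying the factorization $\Pr(R=r,D=d)=(1-q\theta)(q\theta)^r(1-\theta)\theta^d$, which is a welcome completion rather than a deviation.
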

  \begin{proof}

  Condition  \eqref{RR} says
  \[
  \Pr(R=r,D=0)=\Pr(D=0)\Pr(R=r),\quad r=0,1,\ldots.
  \]
  Applying \eqref{qthin} to both sides above and canceling the common term $q^r$   we get
  \[
\tfrac{\Pr(N=r)}{[r+1]_q} =
(1-\theta) \sum_{n\geq r} \tfrac{\Pr(N=n)}{[n+1]_q}.
\]
Subtracting sidewise two such identities for $r$ and $r-1$, $r\geq 1$, we obtain recursion
  \[
  \tfrac{\Pr(N=r)}{[r+1]_q}=\theta \tfrac{\Pr(N=r-1)}{[r]_q},\quad r=1,2,\ldots,
  \]
  which gives
  \[
  \Pr(N=r)=[r+1]_q\,\theta^r\,\beta,\quad r=1,2,\ldots,
  \]
  where $\beta=\Pr(N=0)\neq0$.

  \end{proof}

Similar results for damage models that include
a result for $q=1$, i.e., for uniform damage and under some technical integrability conditions, can be found in \cite[Theorem 3.1]{patil1977characterizations}.

\subsection{A Poisson Example}
We give another explicit example of a pair satisfying \eqref{q-thin}.   If $X_0\sim 1+\mathrm{Poisson}(1)$ is a shift by one of a Poisson random variable  with parameter $\lambda=1$,
then $G$ from \eqref{q-thin} with $\rho=1$
is $\mathrm{Poisson}(1)$.
  Indeed,
  with $\Pr(X_0= n)=e^{-1}/( n-1)!$, $ n=1,2,\dots$, for $m\geq 1$ we get
  \begin{multline*}
     \Pr(G=m)=e^{-1}\sum_{ n=m}^\infty \tfrac{1}{( n-1)!( n+1)} =
  e^{-1}\sum_{ n=m}^\infty \tfrac{ n}{( n+1)!}
  \\
  =e^{-1}\sum_{ n=m}^\infty \pp{\tfrac{ n+1}{( n+1)!}-\tfrac{1}{( n+1)!}}
  =e^{-1}\sum_{ n=m}^\infty \tfrac{1}{ n!}-e^{-1}\sum_{ n=m+1}^\infty \tfrac{1}{ n!}=\tfrac{e^{-1}}{m!} .
  \end{multline*}
  Thus \eqref{Pit-rep} holds with symmetric random walk ($\rho=1$),  $G\sim \mathrm{Poisson}(1)$ and  $X_0\sim 1+\mathrm{Poisson}(1)$.

  Interestingly, $\wt G=X_0-G$ is %
  related to $X_0$ by the same formula, so it is $\mathrm{Poisson}(1)$, too. The pair $\wt G, G$ defines  bivariate distribution
  \[p(i,j)=e^{-1}\tfrac{i+j}{(i+j+1)!},   \quad i,j\in\ZZ_{\geq 0}\] with two $\mathrm{Poisson}(1)$   marginals, a $1+\mathrm{Poisson}(1)$  sum, and uniform laws of the marginals conditioned on the sum.

\appendix
 \section{Proof of Proposition \ref{Prop-T}}\label{sec:Prop-T}
The following identity is inspired by the proof of \cite[Lemma 3.1]{pitman1975one}.
\begin{lemma}\label{L2.4}
  For fixed $\vv x\in C\Zin{0,t}$ and $r\in\Zin{K_0(\vv x),x_t}$, let $\vv s\topp r(\vv x)=(s_0\topp r,\dots,s_t\topp r)$ be given by \eqref{sr}.
  Then
  \begin{equation}\label{min=max}
    \max_{i\in\Zin{0,j}}\left(s_i\topp r(\vv x)+K_0(\vv x)\right)_+ = r\wedge K_j(\vv x) -K_0(\vv x), \quad j\in\Zin{0,t}.
  \end{equation}
\end{lemma}
\begin{proof}
  Since $\vv x$   and $r\in\Zin{K_0(\vv x),x_t}$ are fixed in this proof, we   write   $\vv s\topp r(\vv x)=(s_0,\dots,s_t)$  and $K_0=K_0(\vv x)$,  suppressing the dependence on   parameters $\vv x$  and $r$.
   Denote %
  \begin{equation}\label{rj}
     r_j=r\wedge K_j(\vv x),\quad j\in\Zin{0,t},
  \end{equation}
  For $j=0$ both sides of \eqref{min=max} are $0$, as $r_0=K_0$. %

  To prove that \eqref{min=max} holds for all $j\in\Zin{0,t}$, we proceed by contradiction.
    Suppose \eqref{min=max} holds for some $j-1\in\Zin{0,t-1}$ but fails for $j$. Then writing $\wt s_j = s_j + K_0$, $j\in\Zin{0,t}$, we have
 \begin{equation*}
  \max_{i\in\Zin{0,j}}(\wt s_i)_+ =
       (\wt s_j)_+\vee\max_{i\in\Zin{0, j-1}}(\wt s_i)_+
       =
       \wt s_j\vee\max_{i\in\Zin{0, j-1}} \pp{\wt s_i}_+,
 \end{equation*}
 where in the last equality we used the fact that
 $\max_{i\in\Zin{0, j-1}}(\wt s_i)_+\geq 0$.
    By \eqref{min=max} used for $j-1$, the definition \eqref{sr} of $s_j$ and on noting that $r_{j-1}=r_j\wedge x_{j-1}$ we get

    \begin{multline*}%
      \max_{i\in\Zin{0, j}}\pp{\wt s_i}_+ = (2 r_j-x_j-K_0)\vee(r_{j-1}-K_0)
       =(2 r_j-x_j)\vee r_{j-1}-K_0  = (2r_j-x_j)\vee(r_{j}\wedge x_{j-1}) -K_0.
    \end{multline*}
Our assumption that \eqref{min=max}  fails for $j$ implies that $(r_{j}\wedge x_{j-1})\vee (2r_j-x_j) \ne r_j$. Since $r_j\leq x_j$, it follows that
\[(r_{j}\wedge x_{j-1})\vee (2r_j-x_j)<r_j.\]
Consequently, $2r_j-x_j<r_j$  and $x_{j-1}<r_j$. Thus
$x_{j-1}<r_j<x_j$, which for integers implies $x_j-x_{j-1}\geq 2$, a contradiction. Thus \eqref{min=max} holds for all $j\in\Zin{0,t}$.
\end{proof}
Formula \eqref{min=max} holds also under a different set of assumptions.
\begin{lemma} Let $\vv x\in C\Zin{0,t}$ and $K_0:=K_0(\vv x)=\min_{j\in\Zin{0,t}}x_j$.
   If $\vv x=\TT_{-K_0}(\vv s)$ and $r=R(\vv s, \vv x)$ is given by
   \begin{equation}\label{s2r}
r:=R(\vv s,\vv x):=\max_{i\in\Zin{0, t}}(s_i +K_0)_++K_0.
\end{equation}
  then $r\in\Zin{K_0,x_t}$ and
  \begin{equation}\label{min=max2}
  \max_{i\in\Zin{0, j}}(s_i+K_0)_+ =r\wedge K_j(\vv x)-K_0,\qquad j\in\Zin{0,t}.
  \end{equation}
\end{lemma}
\begin{proof}
To shorten the formulas below, we again use the notation $\wt s_j = s_j + K_0$, $j\in\Zin{0,t}$, introduced in the previous proof. (Note, however, that previously we used this notation with $\vv s\topp r$ and now we use it with $\vv s$.) We also use $r_j$ defined by  \eqref{rj}.

Clearly,  $r\geq K_0$. To confirm that $r\leq x_t$, we note that $\mathbf x=\mathbb T_{-K_0}(\mathbf s)$, in view of \eqref{s2r}, gives
\[x_t=2\max_{i\in\Zin{0, t}} (s_i+K_0 )_+-s_t=
\pp{\max_{i\in\Zin{0, t}} (s_i+K_0 )_+- (s_t+K_0) }+r\geq  r.\]

Next,we prove \eqref{min=max2}.  For $j=t$, \eqref{min=max2} holds since $r\wedge K_t(\vv x)=r\wedge x_t=r$, see again \eqref{s2r}.
To prove that \eqref{min=max2}  holds for all $j\in\Zin{0,t}$, we proceed by contradiction. Suppose \eqref{min=max2} holds for some $j\in\Zin{1,t}$ but not for $j-1$.
Then, see \eqref{rj},
\begin{multline*}
r_{j-1}-K_0=r_j\wedge x_{j-1}-K_0=(r_j-K_0)\wedge (x_{j-1}-K_0)=
\pp{\max_{i\in\Zin{0, j}}(\ \wt s_i )_+}\wedge \pp{x_{j-1}-K_0}
\\
=
\pp{\max_{i\in\Zin{0, j}}(\ \wt s_i )_+}\wedge \pp{2 \max_{i\in\Zin{0, j-1}}(\ \wt s_i )_+-\ \wt s_{j-1}} ,
\end{multline*}
where the penultimate equality follows by \eqref{min=max2} for $j$ and  the last equality follows from $\vv x=\TT_{-K_0}(\vv s)$, see \eqref{def-T}.
 Since \[\max_{i\in\Zin{0, j}}(\wt s_i)_+=\pp{\max_{i\in\Zin{0, j-1}}(\wt s_i)_+}\vee (\wt s_j)_+=\pp{\max_{i\in\Zin{0, j-1}}(\wt s_i)_+}\vee \wt s_j,\]
(in the latter equality we used the fact that $c:={{\max_{i\in\Zin{0, j-1}}(\wt s_i)_+}}\geq 0$),  we obtain
\begin{equation*}\label{W*}
r_{j-1}-K_0
=\pp{c\vee \wt s_j}
\wedge \pp{2 c-\wt s_{j-1}} .
\end{equation*}

  As  we assume that \eqref{min=max2} does not hold  for $j-1$, it follows that
$\pp{c\vee \wt s_j}
\wedge \pp{2 c-\wt s_{j-1}}\ne c$. Since $\wt s_{j-1}\leq c$, we see that
$\pp{c\vee \wt s_j}
\wedge \pp{2 c-\wt s_{j-1}}> c$. Consequently, $\wt s_j>c$ and $2 c-\wt s_{j-1}>c$, whence $\wt s_j>c >\wt s_{j-1}$. For integers, this implies $\wt s_j-\wt s_{j-1}=s_j-s_{j-1}\geq 2$, which is impossible for $\vv s\in C\Zin{0,t}$. Thus \eqref{min=max2}  holds for all $j\in\Zin{0,t}$.

\end{proof}
\color{black}

\begin{lemma}\label{2.5}
  For all $t\in\mathds Z_{\ge 0}$   if $\vv x\in C\Zin{0,t}$ is given by $\vv x=\TT_g(\vv s)$ for some $\vv s\in C\Zin{0,t}$ and some $g\in\ZZ_{\geq 0}$, then with $K_0:=K_0(\vv x)$  given by \eqref{K-def} and  $\vv s \topp r(\vv x)$ defined by \eqref{sr}
we have one of the following cases:
\begin{enumerate}[(i)]
\item $g>-K_0$ and  $\vv s=\vv s\topp{r}(\vv x)=-\vv x$ with $r=K_0$.
\item  $g=-K_0$ and  $\vv s=\vv s\topp r(\vv x)$ with $r=R(\vv s,\vv x)\in\Zin{K_0,x_t}$.
\end{enumerate}
\end{lemma}
\begin{proof} We first verify that   $g\geq-K_0$. To see this, we prove that
 that for all $t\in\ZZ_{\geq 0}$, if $\vv x=\TT_g(\vv s)$ with $\vv s\in C\Zin{0,t}$, then
 \begin{equation}
    \label{K-trop}
    -K_0(\vv x)=g \wedge \max_{j\in\Zin{0,t}}  s_j.
\end{equation}
The proof is by induction on $t$.
Since $x_0=0$ and $g\geq 0$, both sides  of \eqref{K-trop} are zero for $t=0$. Suppose \eqref{K-trop} holds for some $t$. %
Then  for $\vv s\in C\Zin{0,t+1}$  and $C\Zin{0,t+1}\ni \vv x=\TT_g(\vv s)$
\begin{multline}\label{-K0}
   -K_0(\vv x)=(-\min_{j\in\Zin{0,t}} x_j)\vee (-x_{t+1}) =(-\min_{j\in\Zin{0,t}} x_j)\vee  (s_{t+1}-2 (\max_{j\in\Zin{0,t+1}}  s_j-g)_+)
  \\ \stackrel{\eqref{K-trop}}{ = }(g \wedge \max_{j\in\Zin{0,t}}  s_j)\vee  (s_{t+1}-2( \max_{j\in\Zin{0,t+1}}  s_j-g)_+).
\end{multline}
We now consider two cases.
\begin{enumerate}
\item Suppose that $g< \max_{j\in\Zin{0,t+1}}s_j$. Then,  in view of the fact that $\vv s \in C\Zin{0,t+1}$ we have
$g\leq \max_{j\in\Zin{0,t}}s_j$, so $ g \wedge \max_{j\in\Zin{0,t}}  s_j=g$. Since
\[ s_{t+1}-2 (\max_{j\in\Zin{0,t+1}}s_j-g)_+ =(s_{t+1}-\max_{j\in\Zin{0,t+1}}s_j)+(g-\max_{j\in\Zin{0,t+1}}s_j)+g<g,\]
we get $-K_0(\vv x)= g$.%

\item Suppose that $g\geq  \max_{j\in\Zin{0,t+1}}s_j$.  Then  $(\max_{j\in\Zin{0,t+1}}s_j-g)_+=0$ and $g\geq \max_{j\in\Zin{0,t}}s_j$, so from the right-hand side of \eqref{-K0} we get
\[ -K_0(\vv x)
=(g \wedge \max_{j\in\Zin{0,t}}  s_j))\vee s_{t+1}=\max_{j\in\Zin{0,t}}  s_j\vee s_{t+1}=\max_{j\in\Zin{0,t+1}}  s_j.%
\]
\end{enumerate}

Thus in both cases we get $-K_0(\vv x)= g\wedge \max_{j\in\Zin{0,t+1}}s_j$ and \eqref{K-trop} holds for $t+1$.
\color{black}

To prove (i), we observe that if $g>-K_0$ then \eqref{K-trop} implies that $\max_{j\in\Zin{0,t}} s_j=-K_0<g$. Therefore, $\vv x=\TT_g(\vv s)=-\vv s$. In view of \eqref{sr}, $\vv s\topp {K_0}(\vv x)=-\vv x$, so $\vv s=\vv s\topp {K_0}(\vv x)$ as required in {\em (i)}.

 Next, we prove (ii). Suppose $g=-K_0$.     For $\vv s$   and $\vv x=\TT_{-K_0}(\vv s)$, let $r=R(\vv s,\vv x)\in\Zin{K_0(\vv x),x_t}$ be given by \eqref{s2r}. Equation $\vv x=\TT_{-K_0}(\vv s)$, which is
$x_j=2\max_{i\in\Zin{0, j}}( s_i+K_0)_+-s_j$, becomes
$$s_j=2\max_{i\in\Zin{0, j}}( s_i+K_0)_+-x_j\stackrel{\eqref{min=max2}}{=} 2 (r_j-K_0) -x_j\stackrel{\eqref{sr}}{ = } s_j\topp {r}(\vv x).$$
This shows that    for $r=R(\vv s,\vv x)$   we have $\vv s=\vv s\topp r(\vv x)$.

\end{proof}

\begin{proof}[Proof of Proposition \ref{Prop-T}] \sloppy
 Set $K_0=K_0(\vv x)$. We first note that formula \eqref{sr} gives $\vv s\topp {K_0}(\vv x)=-\vv x$. In this case we have
$\max_{i\in\Zin{0, j}} s_i\topp {K_0}\leq \max_{i\in\Zin{0, t}} s_i\topp {K_0} =-K_0$ so
$(\max_{i\in\Zin{0, j}} s_i\topp {K_0}-g )_+=0$ for all $g\geq -K_0$. Formula \eqref{def-T} gives
\[\TT_g(\vv s\topp {K_0})=-\vv s\topp {K_0}=\vv x \quad \mbox{ for all } g\ge -K_0.\]
 This shows that $\TT$ is onto and that  inverse image of $\mathbf x$ under $\TT$ includes the set $\ccbb{ (g,\vv s\topp {K_0}(\vv x)): g\in\llbracket-K_0,\infty)}$.

Suppose now that $r\in\Zin{K_0+1,x_t}$ .
Then by \eqref{min=max} of Lemma \ref{L2.4},  for $\vv s\topp r=\vv s\topp r(\vv x)$ given by \eqref{sr} and $r_j$ given by \eqref{rj},  we have
\[\TT_{-K_0}(\vv s\topp r)= \pp{2(\max_{i\in\Zin{0, j}}s_i\topp r+K_0)_+-s_j\topp r}_{j\in\Zin{0,t}}
=\pp{2(r_j-K_0)-s_j\topp r}_{j\in\Zin{0,t}}=\vv x.\]
This shows that the set $\ccbb{ (-K_0,\vv s\topp r(\vv x)):
r\in\{K_0+1,K_0+2,\ldots,  x_t\}}$ is in the inverse image of $\vv x$ under  $\TT$.

Note that for a given $\vv x$, all  $\{\vv s\topp r(\vv x): r=K_0,K_0+1,\dots,x_t\}$  are distinct. Indeed, from \eqref{min=max} we have
$(\max \vv s\topp{r}+K_0)_+=r-K_0$ and therefore \eqref{s2r} gives $R(\vv s\topp {r},\vv x)=r$. Thus $\{\vv s\topp r\}$ are distinct, as they map to different values in $\{K_0,K_0+1,\ldots,x_t\}$.

The above shows that \eqref{Sets} is a subset of
the inverse image of $\vv x$ under  $\TT$.
By Lemma \ref{2.5} the converse inclusion holds, so the set \eqref{Sets} is the inverse image  $\TT^{-1}(\{\vv x\})$.

Next, we  prove \eqref{UDH2UDH}. In this proof, $r$ is fixed, so we drop superscript $r$ in the notation
$\vv s\topp r=(s_0,\dots,s_t)$. Since
by \eqref{sr} we have
$s_j=2(r_j-K_0)-x_j$, we see that
\[s_{j+1}-s_j=2 ( r_{j+1}-r_j)-(x_{j+1}-x_j)
=2(r_{j+1}-r_{j+1}\wedge x_j)-(x_{j+1}-x_j).\]
We now consider two cases: $x_j\geq r_{j+1}$ and $x_j<r_{j+1}$. In the first case, we
get $r_{j+1}\wedge x_j=r_{j+1}$, so
$s_{j+1}-s_j=x_j-x_{j+1}$. In the second case, $x_{j+1}-x_j=1$ as $x_j<x_{j+1}$. Since
$x_{j+1}-1=x_j<r_{j+1}\leq x_{j+1}$, we have $r_{j+1}=x_{j+1}$. We get
\begin{equation}
  \label{cases}
  s_{j+1}-s_j=\begin{cases}
  x_j-x_{j+1} & x_j\geq r_{j+1}, \\
 x_{j+1}-x_j= 1 &  x_j< r_{j+1}.
  \end{cases}
\end{equation}

We now note that  $x_j=x_{j+1}$ if and only if  $s_{j+1}=s_j$, so  $H(\vv s\topp r)=H(\vv x)$.

Next, define $j_v$ as the largest index $j$ such that  $x_{j}=K_0+v-1$, $v=1,\dots,r-K_0$.
Note that for $\vv x\in C\Zin{0,t}$ such $j_v$ are well defined, as the values of $\vv x$ must cover
the entire range $\{K_0,K_0+1,\ldots,x_t\}$ and $r\in\Zin{K_0,x_t}$. From $x_t>r-1$, we see that $j_v<t$.
For each such $j_v$ we have $x_{j_v}<r_{j_v+1}$, so by \eqref{cases}, $x_{j_v+1}-x_{j_v}=1$.
(To explain why $x_{j_v}<r_{j_v+1}$ we observe that $x_{j_v}$ is below $x_{j_v+1},\dots,x_t$ and $x_{j_v}=K_0+v-1\leq r-1<r.$)

We note that $\{j\ \in\Zin{0,t} : x_j<r_{j+1}\}=\{j_1,\dots,j_{r-K_0}\}$.
Indeed, let $A=\{j\ \in\Zin{0,t} : x_j<r_{j+1}\}$  and $ B=\{j_1,\dots,j_{r-K_0}\}$.  We already noted that $j_v\in A$, that is  $B\subset A$.
To prove the converse inclusion, take $j\in A$. Since $x_j<r_{j+1}$ thus $x_j\leq r-1$ so $j<t$. Thus there exists $v\in\{1,\dots,r- K_0\}$  such that $x_j=K_0+v-1$.
Since $x_j<r_{j+1}\leq \min\{x_{j+1},\dots,x_t\}$, we see that $j=j_v$ since $x_j$ is the last element on the level $K_0+v-1$.

 Thus we identified  $(r-K_0)$ up-steps of $\vv s^r$ which coincide with the up-steps of $\vv  x$. In view of \eqref{cases}, the remaining up-steps of $\vv s^r$ are all the down-steps of $\vv x$, i.e. $U(\vv s\topp r)=r-K_0+D(\vv x)$. The last formula in \eqref{UDH2UDH} comes from $U+D+H=t$.
\end{proof}

\subsection*{Acknowledgments}
 We would like to thank Yizao Wang for several stimulating discussions which in particular led to Theorem \ref{T-YZ}.
WB's research was partially supported by Simons Foundation  Award Number: 703475, US.
 JW's research was partially supported by grant  IDUB no. 1820/366/201/2021,  Poland, and
 National Science Center Poland [project no.  2023/51/B/ST1/01535].
The authors acknowledge support from the Taft Research Center at the University of Cincinnati.

\def\polhk#1{\setbox0=\hbox{#1}{\ooalign{\hidewidth
  \lower1.5ex\hbox{`}\hidewidth\crcr\unhbox0}}}


\begin{thebibliography}{}

\bibitem[Barraquand and {Le Doussal}, 2022]{barraquand2022steady}
Barraquand, G. and {Le Doussal}, P. (2022).
\newblock Steady state of the {KPZ} equation on an interval and {L}iouville
  quantum mechanics.
\newblock {\em Europhysics Letters}, 137(6):61003.
\newblock ArXiv preprint with Supplementary material:
  \url{https://arxiv.org/abs/2105.15178}.

\bibitem[Bertoin and Doney, 1994]{bertoin1994conditioning}
Bertoin, J. and Doney, R.~A. (1994).
\newblock On conditioning a random walk to stay nonnegative.
\newblock {\em Ann. Probab.}, 22(4):2152--2167.

\bibitem[Biane et~al., 2005]{biane2005littelmann}
Biane, P., Bougerol, P., and O'Connell, N. (2005).
\newblock Littelmann paths and brownian paths.
\newblock {\em Duke Mathematical Journal}, 130:127--167.

\bibitem[Biane et~al., 2009]{biane2009continuous}
Biane, P., Bougerol, P., and O'Connell, N. (2009).
\newblock Continuous crystal and {D}uistermaat--{H}eckman measure for {C}oxeter
  groups.
\newblock {\em Advances in Mathematics}, 221(5):1522--1583.

\bibitem[Billingsley, 1968]{Billingsley1968}
Billingsley, P. (1968).
\newblock {\em Convergence of probability measures}.
\newblock John Wiley \& Sons, Inc., New York-London-Sydney.

\bibitem[Billingsley, 1999]{billingsley99convergence}
Billingsley, P. (1999).
\newblock {\em Convergence of probability measures}.
\newblock Wiley Series in Probability and Statistics: Probability and
  Statistics. John Wiley \& Sons Inc., New York, second edition.
\newblock A Wiley-Interscience Publication.

\bibitem[Bougerol and Defosseux, 2022]{bougerol2018pitman}
Bougerol, P. and Defosseux, M. (2022).
\newblock Pitman transforms and {B}rownian motion in the interval viewed as an
  affine alcove.
\newblock {\em Annales Scientifiques de l’Ecole Normale Superieure}, 55.
\newblock arXiv preprint arXiv:1808.09182.

\bibitem[Bryc and Kuznetsov, 2022]{Bryc-Kuznetsov-2021}
Bryc, W. and Kuznetsov, A. (2022).
\newblock {M}arkov limits of steady states of the {KPZ} equation on an
  interval.
\newblock {\em ALEA Lat. Am. J. Probab. Math. Stat.}, 19(2):1329--1351.
\newblock (\url{http://arxiv.org/abs/2109.04462}).

\bibitem[Bryc and Wang, 2024]{Bryc-Wang-2023b}
Bryc, W. and Wang, Y. (2024).
\newblock Fluctuations of random {M}otzkin paths {II}.
\newblock {\em ALEA, Lat. Am. J. Probab. Math. Stat.}, pages 73--94.
\newblock \url{http://arxiv.org/abs/2304.12975}.

\bibitem[Chapon and Chhaibi, 2021]{Chapon-Chhaibi-2021}
Chapon, F. and Chhaibi, R. (2021).
\newblock Quantum {$SL_2$}, infinite curvature and {P}itman's 2{M}-{X} theorem.
\newblock {\em Probab. Theory Related Fields}, 179(3-4):835--888.

\bibitem[Charalambides, 2016]{charalambides2016discrete}
Charalambides, C.~A. (2016).
\newblock {\em Discrete {$q$}-distributions}.
\newblock John Wiley \& Sons, Inc., Hoboken, NJ.

\bibitem[Corwin et~al., 2024]{corwin2024periodic}
Corwin, I., Gu, Y., and Sorensen, E. (2024).
\newblock Periodic {P}itman transforms and jointly invariant measures.
\newblock {\em arXiv preprint arXiv:2409.03613}.

\bibitem[Corwin et~al., 2014]{corwin2014tropical}
Corwin, I., O’Connell, N., Sepp{\"a}l{\"a}inen, T., and Zygouras, N. (2014).
\newblock Tropical combinatorics and {W}hittaker functions.
\newblock {\em Duke Mathematical Journal}, 163(3):513.

\bibitem[Croydon et~al., 2023]{Croydon2023}
Croydon, D.~A., Kato, T., Sasada, M., and Tsujimoto, S. (2023).
\newblock Dynamics of the box-ball system with random initial conditions via
  {P}itman's transformation.
\newblock {\em Mem. Amer. Math. Soc.}, 283(1398):vii+99.

\bibitem[Defosseux and Herent, 2023]{defosseux2023converse}
Defosseux, M. and Herent, C. (2023).
\newblock A converse to {P}itman's theorem for a space-time {B}rownian motion
  in an affine {W}eyl chamber.
\newblock \url{http://arxiv.org/abs/2211.03439}.

\bibitem[Donati-Martin et~al., 2001]{donati2001some}
Donati-Martin, C., Matsumoto, H., and Yor, M. (2001).
\newblock Some absolute continuity relationships for certain anticipative
  transformations of geometric {B}rownian motions.
\newblock {\em Publ. Res. Inst. Math. Sci.}, 37(3):295--326.

\bibitem[Hambly et~al., 2001]{hambly2001pitman}
Hambly, B., Martin, J., and O'Connell, N. (2001).
\newblock Pitman's {$2M-X$} theorem for skip-free random walks with {M}arkovian
  increments.
\newblock {\em Electron. Comm. Probab.}, 6:73--77.

\bibitem[Hariya and Yor, 2004]{hariya2004limiting}
Hariya, Y. and Yor, M. (2004).
\newblock Limiting distributions associated with moments of exponential
  {B}rownian functionals.
\newblock {\em Studia Sci. Math. Hungar.}, 41(2):193--242.

\bibitem[Herent, 2024]{herent2024discretetime}
Herent, C. (2024).
\newblock A discrete-time {M}atsumoto-{Y}or theorem.
\newblock \url{https://arxiv.org/abs/2409.01044}.

\bibitem[Imhof, 1992]{Imhof1992}
Imhof, J.-P. (1992).
\newblock A simple proof of {P}itman's {$2M-X$} theorem.
\newblock {\em Adv. in Appl. Probab.}, 24(2):499--501.

\bibitem[Keener, 1992]{keener1992limit}
Keener, R.~W. (1992).
\newblock Limit theorems for random walks conditioned to stay positive.
\newblock {\em Ann. Probab.}, 20(2):801--824.

\bibitem[Lawler and Limic, 2010]{lawler2010random}
Lawler, G.~F. and Limic, V. (2010).
\newblock {\em Random walk: a modern introduction}, volume 123.
\newblock Cambridge University Press.
\newblock \url{https://www.math.uchicago.edu/~lawler/srwbook.pdf}.

\bibitem[Matsumoto and Yor, 1999]{matsumoto1999some}
Matsumoto, H. and Yor, M. (1999).
\newblock Some changes of probabilities related to a geometric brownian motion
  version of pitman's 2mx theorem.
\newblock {\em C. R. Acad. Sci. Paris}, 328:1067--1074.

\bibitem[Mishchenko, 2006]{mishchenko2006discrete}
Mishchenko, A.~S. (2006).
\newblock A discrete {B}essel process and its properties.
\newblock {\em Theory of Probability \& Its Applications}, 50(4):700--709.

\bibitem[Miyazaki and Tanaka, 1989]{miyazaki1989theorem}
Miyazaki, H. and Tanaka, H. (1989).
\newblock A theorem of {P}itman type for simple random walks on {$\bold Z^d$}.
\newblock {\em Tokyo J. Math.}, 12(1):235--240.

\bibitem[Morayne and Solecki, 1989/90]{Morayne1989}
Morayne, M. and Solecki, S. (1989/1990).
\newblock Martingale proof of the existence of {L}ebesgue points.
\newblock {\em Real Anal. Exchange}, 15(1):401--406.

\bibitem[O'Connell, 2003a]{o2003conditioned}
O'Connell, N. (2003a).
\newblock Conditioned random walks and the {RSK} correspondence.
\newblock {\em Journal of Physics A: Mathematical and General}, 36(12):3049.

\bibitem[O'Connell, 2003b]{Oconnell-2003-trans}
O'Connell, N. (2003b).
\newblock A path-transformation for random walks and the {R}obinson-{S}chensted
  correspondence.
\newblock {\em Trans. Amer. Math. Soc.}, 355(9):3669--3697.

\bibitem[O'Connell and Yor, 2002]{OConnell2002representation}
O'Connell, N. and Yor, M. (2002).
\newblock A representation for non-colliding random walks.
\newblock {\em Electron. Comm. Probab.}, 7:1--12.

\bibitem[Patil and Ratnaparkhi, 1977]{patil1977characterizations}
Patil, G.~P. and Ratnaparkhi, M.~V. (1977).
\newblock Characterizations of certain statistical distributions based on
  additive damage models involving {R}ao-{R}ubin condition and some of its
  variants.
\newblock {\em Sankhy\={a} Ser. B}, 39(1):65--75.

\bibitem[Pitman, 1975]{pitman1975one}
Pitman, J.~W. (1975).
\newblock One-dimensional {B}rownian motion and the three-dimensional {B}essel
  process.
\newblock {\em Advances in Appl. Probability}, 7(3):511--526.

\bibitem[Radhakrishna~Rao and Rubin, 1964]{rao1964characterization}
Radhakrishna~Rao, C. and Rubin, H. (1964).
\newblock On a characterization of the {P}oisson distribution.
\newblock {\em Sankhy\={a} Ser. A}, 26:295--298.

\bibitem[Rogers and Pitman, 1981]{Rogers_Pitman:1981}
Rogers, L. C.~G. and Pitman, J.~W. (1981).
\newblock {Markov Functions}.
\newblock {\em The Annals of Probability}, 9(4):573 -- 582.

\bibitem[Rudin, 1987]{rudin1987real}
Rudin, W. (1987).
\newblock {\em Real and complex analysis}.
\newblock McGraw-Hill Book Co., New York, third edition.

\bibitem[Tanaka, 1989]{tanaka1989time}
Tanaka, H. (1989).
\newblock Time reversal of random walks in one-dimension.
\newblock {\em Tokyo J. Math.}, 12(1):159--174.

\bibitem[Williams, 1974]{williams1974path}
Williams, D. (1974).
\newblock Path decomposition and continuity of local time for one-dimensional
  diffusions, i.
\newblock {\em Proceedings of the London Mathematical Society}, 3(4):738--768.

\end{thebibliography}
\end{document}